\documentclass[12pt, a4paper]{amsart}
\usepackage{amsmath}
\usepackage{latexsym,amsfonts,amssymb,amsmath,amsxtra,bbm,color,mathrsfs}
\usepackage{amscd}
\usepackage{mathdots}
\allowdisplaybreaks[4]

\usepackage{braket}

\usepackage{hyperref}
\usepackage{pdfsync}
\usepackage[all]{xypic}

\usepackage{verbatim}

\newcommand{\BQ}{{\mathbb {Q}}}

\newcommand{\CE}{{\mathcal {E}}}

\newcommand{\CJ}{{\mathcal {J}}}

\newcommand{\RB}{{\mathrm {B}}}
\newcommand{\RC}{{\mathrm {C}}}

\newcommand{\RG}{{\mathrm {G}}}
\newcommand{\RH}{{\mathrm {H}}}
\newcommand{\RI}{{\mathrm {I}}}
\newcommand{\RJ}{{\mathrm {J}}}

\newcommand{\RL}{{\mathrm {L}}}

\newcommand{\RN}{{\mathrm {N}}}

\newcommand{\RP}{{\mathrm {P}}}

\newcommand{\RT}{{\mathrm {T}}}
\newcommand{\RU}{{\mathrm {U}}}

\newcommand{\RZ}{{\mathrm {Z}}}

\newcommand{\GL}{{\mathrm{GL}}}

\renewcommand{\Re}{{\mathrm{Re}}}

\newcommand{\Z}{\mathbb{Z}}
\newcommand{\C}{\mathbb{C}}
\newcommand{\R}{\mathbb R}

\newcommand{\K}{\mathbb{K}}
\newcommand{\M}{\mathbf{M}}

\newcommand{\be}{\begin {equation}}
\newcommand{\ee}{\end {equation}}
\newcommand{\bee}{\begin {equation*}}
\newcommand{\eee}{\end {equation*}}

\theoremstyle{Theorem}

\theoremstyle{Theorem}

\theoremstyle{Theorem}

\theoremstyle{Theorem}
\newtheorem{prp}{Proposition}[section]

\newtheorem{lemp}[prp]{Lemma}
\newtheorem{thmp}[prp]{Theorem}

\theoremstyle{Plain}

\theoremstyle{Definition}

\newtheorem{dfnp}[prp]{Definition}

\begin{document}

	\title[Archimedean Modular Symbols]{Archimedean modular symbols for Rankin-Selberg convolutions of $\GL_n(\C)\times \GL_n(\C)$ and $\GL_n(\C)\times \GL_{n+1}(\C)$}
	
	\author[Y. Jin]{Yubo Jin}
	\address{Institute for Advanced Study in Mathematics, Zhejiang University\\
		Hangzhou, 310058, China}\email{yubo.jin@zju.edu.cn}


	
	\author[Z. Tian]{Zeyu Tian}
	\address{School of Mathematical Sciences,  Zhejiang University\\
		Hangzhou, 310058, China}\email{tianzeyu@zju.edu.cn}
	
	\date{\today}
	\subjclass[2020]{Primary 22E41; Secondary 11F70, 22E45, 11F67}
	\keywords{Rankin-Selberg convolution, cohomological representation, archimedean modular symbol}

	\maketitle
	
	\begin{abstract}
		We explicit calculate the archimedean modular symbol for Rankin-Selberg convolutions of $\GL_n(\C)\times\GL_n(\C)$ and $\GL_n(\C)\times\GL_{n+1}(\C)$, for all generic cohomological representations. 
	\end{abstract}
	
	\section{Introduction and main results}
	
	This paper continues the study of archimedean modular symbols for Rankin-Selberg convolutions of $\GL_n\times\GL_n$ and $\GL_n\times\GL_{n+1}$ defined in \cite{JLSa, LLS24} over the complex archimedean local field. The non-vanishing of archimedean modular symbols, also known as the non-vanishing hypothesis, is first proved by \cite{Sun} for the $\GL_n\times\GL_{n+1}$ case and partially by \cite{DX} for the $\GL_n\times\GL_n$ case. In \cite{JLSa, LLS24}, the archimedean period relations for both cases are established, which imply the non-vanishing hypothesis in full generality. The archimedean period relation is also a pivotal ingredient in establishing period relation for special values of L-functions in \cite{JLSa, LLS24}. However, the archimedean modular symbols are not explicitly calculated in above-mentioned papers. It is the aim of the present paper to provide an explicit calculation as complement to \cite{JLSa, LLS24}.
	
	The explicit calculation for the $\GL_n\times\GL_{n+1}$ case is already carried out in \cite{HMN}, for essentially tempered representations. The restriction there on essentially temperedness is imposed due to the global application for cuspidal regular algebraic representations and can be removed without difficulty. Therefore the main focus of this paper is the $\GL_n\times\GL_n$ case. We acknowledge that this work is substantially based on the calculation of archimedean Rankin-Selberg integrals by Ishii-Miyazaki \cite{IM} and the strategy of calculating archimedean modular symbols in \cite{HMN}.
	
	We introduce various notions and state our main results in the subsequent subsections. For any positive integer $n$, $\GL_n$ is the general linear group over $\Z$. Let $\RB_n=\RT_n\RN_n$ be the upper-triangular Borel subgroup of $\GL_n$ with $\RT_n$ the diagonal torus and $\RN_n$ the unipotent radical. Likewise, let $\overline{\RB}_n=\RT_n\overline{\RN}_n$ be the lower-triangular Borel subgroup with unipotent radical $\overline{\RN}_n$. The centre of $\GL_n$ is denoted by $\RZ_n$. Denote by $1_n$ the identity matrix of size $n$.
	
	Let $\K$ be an archimedean local field which is topologically isomorphic to $\C$. We fix a continuous field isomorphism $\iota:\K\xrightarrow{\sim}\C$ and denote by $\overline{\iota}$ the composition of $\iota$ with the complex conjugation. Let $K_n:=\RU(n)$, to be viewed as a maximal compact subgroup of $\GL_n(\K)$ via
	\[
	\RU(n)\subset\GL_n(\C)\xrightarrow{\iota^{-1}}\GL_n(\K).
	\] 
	Put $\widetilde{K}_n:=K_n\cdot\RZ_n(\K)$. Denote $\mathfrak{g}_n$, $\mathfrak{k}_n$, $\widetilde{\mathfrak{k}}_n$ for the complexified Lie algebras of $\GL_n(\K)$, $K_n$, $\widetilde{K}_n$ respectively.

	\subsection{Generic cohomological representations}
	
	Fix a positive integer $n$. As usual, we say that a weight
	\[
	\mu:=(\mu^{\iota};\mu^{\overline{\iota}}):=(\mu_1^{\iota},\mu_2^{\iota},\dots,\mu_n^{\iota};\mu_1^{\overline{\iota}},\mu_2^{\overline{\iota}},\dots,\mu_n^{\overline{\iota}})\in\Z^n\times\Z^n
	\]
	is dominant if
	\[
	\mu_1^{\iota}\geq\mu_2^{\iota}\geq\cdots\geq\mu_n^{\iota},\quad\mu_1^{\overline{\iota}}\geq\mu_2^{\overline{\iota}}\geq\cdots\geq\mu_n^{\overline{\iota}}.
	\]
	
	Let $\mu$ be such a dominant weight and denote by $F_{\mu}$ (the unique up to isomorphism) irreducible holomorphic finite-dimensional representation of $\GL_n(\K\otimes_{\R}\C)$ of highest weight $\mu$. As a consequence of the Delorme's lemma (\cite[Theorem III.3.3]{BW}), there is a unique (up to isomorphism) generic irreducible cohomological Casselman-Wallach representation $\pi_{\mu}$ of $\GL_n(\K)$ such that the total relative Lie algebra cohomology
	\[
	\RH^{\ast}\left(\mathfrak{g}_n,\widetilde{K}_n;F_{\mu}^{\vee}\otimes\pi_{\mu}\right)\neq\{0\}.
	\]
	Here and henceforth, a superscript `$^{\vee}$' over a finite-dimensional representation or Casselman-Wallach representation will indicates its contragredient representation. 
	
	More explicitly, the representation $\pi_{\mu}$ can be realized as the principle series representation
	\[
	\pi_{\mu}=\mathrm{Ind}^{\GL_n(\K)}_{\overline{\RB}_n(\K)}\left(\iota^{\widetilde{\mu}_1^{\iota}}\overline{\iota}^{\widetilde{\mu}_n^{\overline{\iota}}}\otimes\cdots\otimes\iota^{\widetilde{\mu}_n^{\iota}}\overline{\iota}^{\widetilde{\mu}_1^{\overline{\iota}}}\right),
	\]
	where
	\[
	\widetilde{\mu}_i^{\iota}:=\mu_i^{\iota}+\frac{n+1-2i}{2},\qquad\widetilde{\mu}_i^{\overline{\iota}}:=\mu_i^{\overline{\iota}}+\frac{n+1-2i}{2},\qquad 1\leq i\leq n,
	\]
	and $\iota^a\overline{\iota}^b$ indicates the character $z\mapsto\iota(z)^{a-b}(\iota(z)\overline{\iota}(z))^b$ of $\K^{\times}$ for $a,b\in\C$ with $a-b\in\Z$. The Delorme's lemma furthermore implies that
	\[
	\RH^{i}\left(\mathfrak{g}_n,\widetilde{K}_n;F_{\mu}^{\vee}\otimes\pi_{\mu}\right)=\{0\},\qquad\text{if }i<b_n:=\frac{n(n-1)}{2}
	\]
	and
	\[
	\RH^{b_n}\left(\mathfrak{g}_n,\widetilde{K}_n;F_{\mu}^{\vee}\otimes\pi_{\mu}\right)\cong\C.
	\]
	
	Fix a unitary character 
	\[
	\psi_{\R}:\R\to\C^{\times},\qquad x\mapsto e^{-2\pi\mathrm{i}x}
	\]
	with $\mathrm{i}:=\sqrt{-1}$. Define a character
	\[
	\psi_{\K}:\K\to\C^{\times},\qquad x\mapsto\psi_{\R}\left(\sum_{\iota\in\CE_{\K}}\iota(x)\right),
	\]
	which induces a unitary character
	\[
	\psi_{n,\K}:\RN_n(\K)\to\C^{\times},\qquad [x_{i,j}]_{1\leq i,j\leq n}\mapsto\psi_{\K}\left(\sum_{i=1}^{n-1}x_{i,i+1}\right).
	\]
	
	In the sequel, we let $n'=n$ or $n'=n+1$ and consider dominant weights $\mu\in\Z^{n'}\times\Z^{n'}$, $\nu\in\Z^{n}\times\Z^{n}$. Let $\pi_{\mu},\pi_{\nu}$ be associated generic cohomological representations and  $F_{\mu},F_{\nu}$ be associated irreducible holomorphic finite-dimensional representations. We fix generators
	\[
	\lambda_{\mu}\in\mathrm{Hom}_{\RN_{n'}(\K)}(\pi_{\mu},\psi_{n',\K}),\qquad\lambda_{\nu}\in\mathrm{Hom}_{\RN_{n}(\K)}(\pi_{\nu},\overline{\psi_{n,\K}})
	\]
	for the one dimensional spaces of Whittaker functionals in Section \ref{whittaker}.

	\subsection{Degenerate principal series representations}
	
	Let $\RP_n(\K)$ be the parabolic subgroup of $\GL_n(\K)$ consisting of all the matrices whose last row equals $\begin{bmatrix}
		0 & \cdots & 0 & a
	\end{bmatrix}$ for some $a\in\K^{\times}$. Denote by $\eta$ the product of central characters of $\pi_{\mu}$ and $\pi_{\nu}$. For a character $\chi:\K^{\times}\to\C^{\times}$, we consider the degenerate principal series representations (smooth induction)
	\begin{equation}
		\begin{aligned}
			I_{\eta,\chi}:=&\left(\mathrm{Ind}^{\GL_n(\K)}_{\RP_n(\K)}\left(\mathbf{1}\otimes|\cdot|_{\K}^{\frac{n}{2}}\eta^{-1}\chi^{-n}\right)\right)\otimes\left((|\cdot|_{\K}^{-\frac{1}{2}}\cdot\chi)\circ\det\right)
		\end{aligned}
	\end{equation}
	where $\mathbf{1}$ is the trivial representation of $\GL_{n-1}(\K)$ and $\det$ indicates the determinant character. When $n=1$, we understand $I_{\eta,\chi}=\eta^{-1}$ as a character. 
	As usual, $|\cdot|_{\K}$ is the normalized absolute value on $\K$. Let
	\begin{equation}
		\mathrm{d}\eta:=\{\eta_{\iota};\eta_{\overline{\iota}}\}\in\C\times\C\qquad\mathrm{d}\chi:=\{\chi_{\iota};\chi_{\overline{\iota}}\}\in\C\times\C
	\end{equation}
	denote the complexified differentiations of $\eta$ and $\chi$ respectively. We impose the condition that $\mathrm{d}\eta\in\Z\times\Z$, $\mathrm{d}\chi\in\Z\times\Z$ and
	\begin{equation}
		\label{CM}
		\eta_{\iota}+n\chi_{\iota}\leq 0\qquad \eta_{\overline{\iota}}+n\chi_{\overline{\iota}}\geq n.
	\end{equation}
	This is labeled as Case ($\pm$) in \cite{JLSa}. We remark that our discussion also applies to the case that
	\begin{equation}\label{CM'}
		\eta_{\iota}+n\chi_{\iota}\geq n\qquad \eta_{\overline{\iota}}+n\chi_{\overline{\iota}}\leq 0.
	\end{equation}
	See Section \ref{sec:complexconjugation}.
	
	Denote by $F_{\eta,\chi}$ the irreducible holomorphic finite-dimensional representation of $\GL_n(\K\otimes_{\R}\C)$ whose infinitesimal character equals that of $I_{\eta,\chi}$.  As a consequence of Delorme's lemma \cite[Theorem III.3.3]{BW}, we have that
	\[
	\RH^i\left(\mathfrak{g}_n,\widetilde{K}_n;F_{\eta,\chi}^{\vee}\otimes I_{\eta,\chi}\right)=\{0\},\qquad\text{if }i<c_n:=n-1,
	\]
	and
	\[
	\RH^{c_n}\left(\mathfrak{g}_n,\widetilde{K}_n;F_{\eta,\chi}^{\vee}\otimes I_{\eta,\chi}\right)\cong\C.
	\]

	Denote by $F_{\chi}:=\C$ the holomorphic character of $\GL_n(\K\otimes_{\BQ}\C)$ extending the character $\chi\circ\det:\GL_{n}(\K)\to\C^{\times}$.

	\begin{dfnp}
		An algebraic character $\chi$ of $\K^{\times}$ is said to be $(\mu,\nu)$-balanced if
		\[
		\begin{cases}
			\mathrm{Hom}_{\GL_{n}(\K\otimes_{\R}\C)}\left(F_{\mu}^{\vee}\otimes F_{\nu}^{\vee}\otimes F_{\chi}^{\vee},\C\right)\neq \{0\},&\text{when }n'=n+1;\\
			\mathrm{Hom}_{\GL_{n}(\K\otimes_{\R}\C)}\left(F_{\mu}^{\vee}\otimes F_{\nu}^{\vee}\otimes F_{\eta,\chi}^{\vee},\C\right)\neq \{0\},&\text{when }n'=n.
		\end{cases}
		\]
		Denote the set of all $(\mu,\nu)$-balanced characters by $\RB(\mu,\nu)$.
	\end{dfnp}
	
	From now on, we assume that $\chi\in\RB(\mu,\nu)$. The condition on $\chi$ being $(\mu,\nu)$-balanced is determined in \cite[Lemma 2.6]{JLSa}. Note that above $\mathrm{Hom}$ spaces have dimension one and a generator
	\[
	\phi_{\mu,\nu,\chi}\in	\begin{cases}
		\mathrm{Hom}_{\GL_{n}(\K\otimes_{\R}\C)}\left(F_{\mu}^{\vee}\otimes F_{\nu}^{\vee}\otimes F_{\chi}^{\vee},\C\right),&\text{when }n'=n+1;\\
		\mathrm{Hom}_{\GL_{n}(\K\otimes_{\R}\C)}\left(F_{\mu}^{\vee}\otimes F_{\nu}^{\vee}\otimes F_{\eta,\chi}^{\vee},\C\right),&\text{when }n'=n.
	\end{cases}
	\]
	is constructed in \cite[Section 1.3]{JLSa}.

	\subsection{Archimedean modular symbols}
	
	Fix the Haar measures $\mathrm{d}x$ on $\K$ to be the self-dual measure with respect to $\psi_{\K}$. The Haar measure on $\RN_{n}(\K)$ is fixed to be the product of the Haar measures on $\K$, and the measure on $\RZ_n(\K)=\K^{\times}$ is fixed to be $\frac{1}{\pi}\frac{\mathrm{d}x}{|x|_{\K}}$. Denote by $\mathfrak{M}_{n}$ the one-dimensional complex vector space of left (or right) invariant measures on $\GL_{n}(\K)$. Any invariant measure $\mathrm{d}g$ on $\GL_{n}(\K)$ induces quotient measures on $\RN_{n}(\K)\backslash\GL_{n}(\K)$ and $\RZ_{n}(\K)\RN_{n}(\K)\backslash\GL_{n}(\K)$. We will denote both these quotient measures by $\overline{\mathrm{d}}g$. 
	
	We have identifications
	\begin{equation}
		\label{measure}
		\begin{aligned}
			\mathfrak{M}_{n}=\begin{cases}
				\{\text{invariant measure on }\GL_{n}(\K)/K_{n}\}, & n'=n+1,\\
				\{\text{invariant measure on }\GL_{n}(\K)/\widetilde{K}_{n}\}, & n'=n,
			\end{cases}
		\end{aligned}
	\end{equation}
	by fixing the Haar measure on $K_{n}$ with total volume $1$. Denote by $\mathfrak{O}_{n+1,n}$ (resp. $\mathfrak{O}_{n,n}$) the one-dimensional space of $\GL_{n}(\K)$-invariant sections of the orientation line bundle of $\GL_{n}(\K)/K_{n}$ (resp. $\GL_{n}(\K)/\widetilde{K}_{n}$) with complex coefficients, respectively. Recall that there are canonical identifications
	\begin{equation}
		\label{orientationid}
		\begin{aligned}
			\mathfrak{M}_n&=\wedge^{n^2}(\mathfrak{g}_n/\mathfrak{k}_n)^{\vee}\otimes\mathfrak{O}_{n+1,n}, &\qquad &n'=n+1,\\
			\mathfrak{M}_n&=\wedge^{n^2-1}(\mathfrak{g}_n/\widetilde{\mathfrak{k}}_n)^{\vee}\otimes\mathfrak{O}_{n,n}, &\qquad & n'=n.
		\end{aligned}
	\end{equation}
	See \cite[Section 3.1]{LLS24} for more details. Here and henceforth, a superscript `$^\vee$' over a vector space indicates its dual space.
	
	\subsubsection{The case $n'=n+1$}

	We view $\GL_{n}$ as an algebraic subgroup of $\GL_{n+1}$ via the embedding
	\[
	g\mapsto\begin{bmatrix}
		g & 0\\
		0 & 1
	\end{bmatrix}.
	\]
	For every $s\in\C$, denote by $\chi_s:=\chi\cdot|\cdot|^s_{\K}$. We have the Rankin-Selberg integral
	\[
	\RZ(f,f';\chi_s,\mathrm{d}g):=\int_{\RN_{n}(\K)\backslash\GL_{n}(\K)}\langle\lambda_{\mu},g.f\rangle\langle\lambda_{\nu}',g.f'\rangle\chi_s(\det g)\overline{\mathrm{d}}g,
	\]
	where $f\in\pi_{\mu}$, $f'\in\pi_{\nu}$ and $\mathrm{d}g\in\mathfrak{M}_{n}$. It converges absolutely when $\mathrm{Re}(s)$ is sufficiently large and has a meromorphic continuation to the whole complex plane in $s$. By \cite{J09}, the normalized Rankin-Selberg integral
	\[
	\RZ^{\circ}(f,f';\chi_s,\mathrm{d}g):=\frac{1}{\RL(s+\frac{1}{2},\pi_{\mu}\times\pi_{\nu}\times\chi)}\RZ(f,f';\chi_s,\mathrm{d}g)
	\]
	is holomorphic at $s=0$ and specializing to $s=0$ yields a non-zero linear functional
	\[
	\RZ_{\mu,\nu,\chi}^{\circ}\in\mathrm{Hom}_{\GL_{n}(\K)}\left(\pi_{\mu}\widehat{\otimes}\pi_{\nu}\otimes(\chi\circ\det),\mathfrak{M}_{n}^{\vee}\right).
	\]

	Define a vector space
	\[
	\begin{aligned}
		\RH_{\mu,\nu,\chi}:=\RH^{b_{n+1}}\left(\mathfrak{g}_{n+1},\widetilde{K}_{n+1};F_{\mu}^{\vee}\otimes\pi_{\mu}\right)&\otimes\RH^{b_{n}}\left(\mathfrak{g}_{n},\widetilde{K}_{n};F_{\nu}^{\vee}\otimes\pi_{\nu}\right)\\
		&\otimes\RH^0\left(\mathfrak{g}_{n},\widetilde{K}_{n};F_{\chi}^{\vee}\otimes(\chi\circ\det)\right).
	\end{aligned}
	\]
	We define the archimedean modular symbol, which is a linear functional
	\[
	\wp_{\mu,\nu,\chi}:\RH_{\mu,\nu,\chi}\otimes\mathfrak{O}_{n+1,n}\to\C
	\]
	as the composition
	\[
	\begin{aligned}
		&\RH_{\mu,\nu,\chi}\otimes\mathfrak{O}_{n+1,n}\\
		\xrightarrow{\mathrm{res}}	& \RH^{n^2}\left(\mathfrak{g}_{n},K_{n};F_{\mu}^{\vee}\otimes F_{\nu}^{\vee}\otimes F_{\chi}^{\vee}\otimes\pi_{\mu}\widehat{\otimes}\pi_{\nu}\otimes(\chi\circ\det)\right)\otimes\mathfrak{O}_{n+1,n}\\
		\xrightarrow{\phi_{\mu,\nu,\chi}\otimes\RZ_{\mu,\nu,\chi}^{\circ}}&\RH^{n^2}\left(\mathfrak{g}_{n},K_{n};\mathfrak{M}_{n}^{\vee}\right)\otimes\mathfrak{O}_{n+1,n}=\C.
	\end{aligned}
	\]
	Here the first map is the restriction of cohomologies; the second map is induced by the balanced map $\phi_{\mu,\nu,\chi}$ and the normalized Rankin-Selberg integral $\RZ_{\mu,\nu,\chi}^{\circ}$; the final identification is due to \eqref{orientationid}.
	
	\subsubsection{The case $n'=n$}
	
	For every $s\in\C$, we have the Rankin-Selberg integral
	\[
	\RZ(f,f',\varphi_s;\chi_s,\mathrm{d}g):=\int_{\RZ_n(\K)\RN_{n}(\K)\backslash\GL_{n}(\K)}\langle\lambda_{\mu},g.f\rangle\langle\lambda_{\nu}',g.f'\rangle\varphi_s(\det g)\overline{\mathrm{d}}g,
	\]
	where $f\in\pi_{\mu}$, $f'\in\pi_{\nu}$, $\varphi_s\in I_{\eta,\chi_s}$ and $\mathrm{d}g\in\mathfrak{M}_{n}$. It converges absolutely when $\mathrm{Re}(s)$ is sufficiently large and has a meromorphic continuation to the whole complex plane in $s$. By \cite{J09} and \cite[Propsition 3.2]{JLSa}, the normalized Rankin-Selberg integral
	\[
	\RZ^{\circ}(f,f',\varphi_s;\chi_s,\mathrm{d}g):=\frac{\RL(ns,\eta\chi^n)}{\RL(s,\pi_{\mu}\times\pi_{\nu}\times\chi)}\RZ(f,f',\varphi_s;\chi_s,\mathrm{d}g)
	\]
	is holomorphic at $s=0$ and specializing to $s=0$ yields a non-zero linear functional
	\[
	\RZ_{\mu,\nu,\chi}^{\circ}\in\mathrm{Hom}_{\GL_{n}(\K)}\left(\pi_{\mu}\widehat{\otimes}\pi_{\nu}\widehat{\otimes}I_{\eta,\chi},\mathfrak{M}_n^{\vee}\right).
	\]
	
	Define a vector space
	\[
	\begin{aligned}
		\RH_{\mu,\nu,\chi}:=\RH^{b_n}\left(\mathfrak{g}_n,\widetilde{K}_n;F_{\mu}^{\vee}\otimes\pi_{\mu}\right)&\otimes\RH^{b_{n}}\left(\mathfrak{g}_{n},\widetilde{K}_{n};F_{\nu}^{\vee}\otimes\pi_{\nu}\right)\\
		&\otimes\RH^{c_n}\left(\mathfrak{g}_{n},\widetilde{K}_{n};F_{\eta,\chi}^{\vee}\otimes I_{\eta,\chi}\right).
	\end{aligned}
	\]
	Similar to the $\GL_n\times\GL_{n-1}$ case, we define the archimedean modular symbol, which is a linear functional
	\begin{equation}
		\label{wpmunuchi}
		\wp_{\mu,\nu,\chi}:\RH_{\mu,\nu,\chi}\otimes\mathfrak{O}_{n,n}\to\C
	\end{equation}
	as the composition
	\[
	\begin{aligned}
		&\RH_{\mu,\nu,\chi}\otimes\mathfrak{O}_{n,n}\\
		\xrightarrow{\mathrm{res}}	& \RH^{n^2-1}\left(\mathfrak{g}_{n},\widetilde{K}_{n};F_{\mu}^{\vee}\otimes F_{\nu}^{\vee}\otimes F_{\eta,\chi}^{\vee}\otimes\pi_{\mu}\widehat{\otimes}\pi_{\nu}\widehat{\otimes}I_{\eta,\chi}\right)\otimes\mathfrak{O}_{n,n}\\
		\xrightarrow{\phi_{\mu,\nu,\chi}\otimes\RZ_{\mu,\nu,\chi}^{\circ}}&\RH^{n^2-1}\left(\mathfrak{g}_{n},\widetilde{K}_{n};\mathfrak{M}_{n}^{\vee}\right)\otimes\mathfrak{O}_{n,n}=\C.
	\end{aligned}
	\]
	
	\subsection{The main theorem}
	
	Let $\mathbf{o}_{n+1,n}\in\mathfrak{O}_{n+1,n}$ and $\mathbf{o}_{n,n}\in\mathfrak{O}_{n,n}$ be orientations fixed in Section \ref{orientation}. We fix generators
	\[
	[\kappa_{\mu}]\in\RH^{b_{n'}}\left(\mathfrak{g}_{n'},\widetilde{K}_{n'};F_{\mu}^{\vee}\otimes\pi_{\mu}\right),\quad
	[\kappa_{\nu}]\in\RH^{b_{n}}\left(\mathfrak{g}_{n},\widetilde{K}_{n};F_{\nu}^{\vee}\otimes\pi_{\nu}\right)
	\]
	in Section \ref{generic} and generators
	\[
	[\kappa_{\eta,\chi}]\in\RH^{c_n}\left(\mathfrak{g}_n,\widetilde{K}_n;F_{\eta,\chi}^{\vee}\otimes I_{\eta,\chi}\right)
	\]
	in Section \ref{induced}. Denote by
	\[
	[\kappa_{\chi}]\in\RH^0\left(\mathfrak{g}_{n},\widetilde{K}_{n};F_{\chi}^{\vee}\otimes(\chi\circ\det)\right)
	\]
	the canonical generator. Following \cite[(24)]{JLSa}, we introduce following constants:
	\begin{equation}\label{constants}
		\begin{aligned}
			c_{\mu,\nu,\chi}&:=\prod_{i+k\leq n-1}\mathrm{i}^{\mu_i^{\iota}+\mu_i^{\overline{\iota}}+\nu_k^{\iota}+\nu_k^{\overline{\iota}}+\chi_{\iota}+\chi_{\overline{\iota}}},\\
			c'_{\mu,\nu,\chi}&:=\prod_{i+k\leq n}\mathrm{i}^{\mu_i^{\iota}+\mu_i^{\overline{\iota}}+\nu_k^{\iota}+\nu_k^{\overline{\iota}}+\chi_{\iota}+\chi_{\overline{\iota}}-1},\\
			\varepsilon_{\mu,\nu,\chi}&:=\prod_{i>k,\, i+k\leq n+1}(-1)^{\mu_i^{\iota}+\mu_i^{\overline{\iota}}+\nu_k^{\iota}+\nu_k^{\overline{\iota}}+\chi_{\iota}+\chi_{\overline{\iota}}},\\
			\varepsilon'_{\mu,\nu,\chi}&:=\prod_{i=1}^n(-1)^{\mu_i^{\overline{\iota}}+n(\nu_i^{\overline{\iota}}-1)+\chi_{\overline{\iota}}}\cdot\prod_{i>k,\,i+k\leq n}(-1)^{\mu_i^{\iota}+\nu_k^{\iota}+\mu_{n+1-i}^{\overline{\iota}}+\nu_{n+1-k}^{\overline{\iota}}+\chi_{\iota}+\chi_{\overline{\iota}}}.
		\end{aligned}
	\end{equation}
	Our main theorem is stated as follows. 

	\begin{thmp}
		\label{mainthm}
		Retain the notations and assumptions as above. We have that
		\begin{itemize}
			\item[(1)] When $n'=n+1$ we have that
			\[
			\begin{aligned}
				&\wp_{\mu,\nu,\chi}\left([\kappa_{\mu}]\otimes[\kappa_{\nu}]\otimes[\kappa_{\chi}]\otimes\mathbf{o}_{n+1,n}\right)\\
				=&\,2^{-n(n+1)}\cdot\mathrm{i}^{-\frac{n(n-1)}{2}}\cdot(-1)^{\frac{(n+1)n}{2}}\cdot c_{\mu,\nu,\chi}\cdot\varepsilon_{\mu,\nu,\chi}.
			\end{aligned}
			\]
			\item[(2)] When $n'=n$ we have that
			\[
			\begin{aligned}
				&\wp_{\mu,\nu,\chi}\left([\kappa_{\mu}]\otimes[\kappa_{\nu}]\otimes[\kappa_{\eta,\chi}]\otimes\mathbf{o}_{n,n}\right)\\
				=&\,2^{-n(n+1)}\cdot (-1)^{\frac{n(n-1)(n+1)}{6}}\cdot c'_{\mu,\nu,\chi}\cdot\varepsilon'_{\mu,\nu,\chi}.
			\end{aligned}
			\]
		\end{itemize}
	\end{thmp}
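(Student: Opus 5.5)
The plan is to reduce the modular symbol to a single normalized Rankin--Selberg integral of explicit minimal $K$-type vectors, and then evaluate that integral with the Ishii--Miyazaki formulas, following the scheme of \cite{HMN}.

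\textbf{Step 1: writing out the restricted class.} Write the generators $[\kappa_{\mu}]$, $[\kappa_{\nu}]$ (and $[\kappa_{\eta,\chi}]$ when $n'=n$) as explicit cocycles. Each is a sum over bases of $\wedge^{b}(\mathfrak{g}/\widetilde{\mathfrak{k}})^{\vee}$, tensored with a basis of $F^{\vee}$, tensored with $K$-finite vectors in the minimal $K$-type of the principal series. Under restriction to $\GL_n$ and cup product, the top-degree component in $\wedge^{n^2}(\mathfrak{g}_n/\mathfrak{k}_n)^{\vee}$ (resp.\ $\wedge^{n^2-1}(\mathfrak{g}_n/\widetilde{\mathfrak{k}}_n)^{\vee}$) is obtained by wedging the restricted forms. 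Pairing with $\mathbf{o}_{n+1,n}$ (resp.\ $\mathbf{o}_{n,n}$) through \eqref{orientationid} turns the modular symbol into a finite sum
\[
\sum \pm\,\phi_{\mu,\nu,\chi}(\text{weight vectors})\cdot \RZ^{\circ}(f,f',\varphi).
\]

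\textbf{Step 2: collapsing to one integral.} Use $K_n$-equivariance of $\RZ^{\circ}$ and of $\phi_{\mu,\nu,\chi}$ to rewrite this sum. It should become a $K_n$-invariant contraction of the minimal $K$-types of $\pi_\mu|_{K_n}$, $\pi_\nu$ and $I_{\eta,\chi}$. The contraction is unique up to scalar, because the relevant multiplicity-one branching (Gelfand--Tsetlin for $\RU(n)\subset\RU(n+1)$) is forced by the balanced condition. Hence the whole sum equals a combinatorial constant times one normalized integral evaluated at a distinguished triple of weight vectors (extremal Gelfand--Tsetlin patterns).

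That combinatorial constant gathers the wedge-ordering signs, the factors of $2$ from the normalization of the basis of $\mathfrak{p}=\mathfrak{g}/\mathfrak{k}$, and the value of $\phi_{\mu,\nu,\chi}$ on highest/lowest weight vectors as constructed in \cite[Section 1.3]{JLSa}. The powers of $2$ and the sign factors $(-1)^{n(n+1)/2}$ and $(-1)^{n(n-1)(n+1)/6}$ should come out of this bookkeeping.

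\textbf{Step 3: evaluating the distinguished integral.} Compute the distinguished integral using Ishii--Miyazaki \cite{IM}. They give an explicit formula for the archimedean Rankin--Selberg integral of the minimal-$K$-type Whittaker functions normalized by the generators $\lambda_\mu$, $\lambda_\nu$. That value is exactly the $L$-factor times a product of powers of $\mathrm{i}$ coming from the Whittaker functions' values under the $\RU(n)$-action.

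For $n'=n$, one must also integrate against the section $\varphi\in I_{\eta,\chi}$. After the Godement--Jacquet-type unfolding this produces the factor $\RL(ns,\eta\chi^n)^{-1}$, and the normalization cancels it at $s=0$. Condition \eqref{CM} is used here to pick the correct holomorphic/antiholomorphic minimal $K$-type of $I_{\eta,\chi}$, which explains the asymmetric factor $\prod_i(-1)^{\mu_i^{\overline\iota}+n(\nu_i^{\overline\iota}-1)+\chi_{\overline\iota}}$ in $\varepsilon'$.

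For $n'=n+1$, I would essentially quote \cite{HMN}. The remaining work there is to check that essential temperedness was not used in their computation.

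\textbf{Main obstacle.} The hard part is Step 2 together with the sign bookkeeping, in particular matching the Ishii--Miyazaki normalization of Whittaker vectors and of the $K$-type bases with the generators fixed in the cited sections, so that the phases $c_{\mu,\nu,\chi}$, $c'_{\mu,\nu,\chi}$ and $\varepsilon,\varepsilon'$ come out exactly. My first attempt would be an induction on $n$ via the recursive structure of the Ishii--Miyazaki formula (which goes through $\GL_{n-1}$), tracking how each new factor $\mathrm{i}^{\mu_i+\nu_k+\chi}$ with $i+k\le n$ appears.
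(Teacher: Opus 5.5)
There is a genuine gap. Your plan never uses the archimedean period relation \cite[Theorem 1.6]{JLSa}, and without it Step 1 cannot be carried out.

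The generators are not given as explicit cocycles with coefficients in $F_\mu^\vee$. They are defined as $[\kappa_\mu]=\jmath_\mu([\kappa_{0_{n'}}])$, $[\kappa_\nu]=\jmath_\nu([\kappa_{0_n}])$ and $[\kappa_{\eta,\chi}]=\jmath_{\eta,\chi}([\kappa_{\eta_0,\chi_0}])$. The translation maps $\jmath$ are characterized only by their compatibility with the fixed Whittaker functionals. You have no explicit description of $\jmath_\mu$ on the minimal $K$-type. So you cannot write down the vectors in $F_\mu^\vee\otimes\pi_\mu$ on which $\phi_{\mu,\nu,\chi}$ and the Ishii--Miyazaki integrals would be evaluated.

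The paper's first move is to invoke the period relation. For exactly these generators, it reduces $\wp_{\mu,\nu,\chi}$ to the trivial-coefficient symbol $\wp_{0_n,0_n,\chi_0}$ (resp.\ $\wp_{0_{n+1},0_n,\mathbf{1}}$), up to the factor $c'_{\mu,\nu,\chi}\varepsilon'_{\mu,\nu,\chi}$ (resp.\ $c_{\mu,\nu,\chi}\varepsilon_{\mu,\nu,\chi}$). These constants \eqref{constants} are taken from \cite[(24)]{JLSa}. They do not come from wedge-sign bookkeeping or from the choice of minimal $K$-type of $I_{\eta,\chi}$. Your explanation of $\varepsilon'$ in Step 3 and your proposed induction on $n$ are therefore unsupported.

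The same problem affects (1). Quoting \cite{HMN} gives the value on HMN's generators. As the paper remarks after the theorem, those differ from $\jmath_\mu([\kappa_{0_{n+1}}])\otimes\jmath_\nu([\kappa_{0_n}])$, so relating them needs exactly the translation-map input you are missing.

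After the reduction, only the trivial-coefficient case remains. There your Step 2 idea matches the paper's use of \cite[Lemma 4.3]{IM}: the $K_n$-invariant in $\Xi_n\otimes\Xi_n\otimes\Xi'_n$ is unique, so the wedge of the three classes is one constant times a fixed invariant tensor. Concretely, $\wedge([\kappa_{0_n}]\otimes[\kappa_{0_n}]\otimes[\kappa_{\eta_0,\chi_0}])=C_{n,n}\,\mathfrak{o}_{n,n}\otimes(\cdots)$, with coefficients from \cite[Proposition 2.13]{IM}.

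However, the real content is determining that constant, which you leave as ``bookkeeping''. The paper does this in four steps:
\begin{itemize}
\item It compares the coefficient of $f_{H_n}\otimes f_{H_n^\vee}\otimes\varphi_{P_\circ}$ on both sides.
\item It computes the lowest weight vector $\omega_{H_n^\vee}$ and its wedge with $\omega_{H_n}$.
\item It writes $\xi_{P_\circ^\vee}$ as an explicit product of operators $\varepsilon_{j,j+1}$ applied to $\xi_{H'^\vee_n}$, using the Gelfand--Tsetlin formulas.
\item It extracts the coefficient of $e_1^\vee\wedge\cdots\wedge e_{n-1}^\vee$, which gives \eqref{CNN}.
\end{itemize}
This is combined with two further inputs. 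The zeta-integral sum equals $\mathrm{i}^{b_n}\,n!\,\RC^{\circ}$, by \cite[Corollary 2.16]{IM} and \cite[Lemma 2.14]{HMN}. The measure comparison \eqref{measureid} supplies $2^{-n(n+1)}\mathrm{i}^{-b_n}$. The factors $n!\,\RC^\circ$ and $\mathrm{i}^{\pm b_n}$ then cancel. Case (1) is analogous, with $C_{n+1,n}=(-1)^{b_{n+1}}$ and zeta-integral sum equal to $1$.
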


	We remark that the case $n'=n+1$ is essentially due to \cite[Theorem 2.17]{HMN}. But the result is different to the one there with respect to different choice of generators $[\kappa_{\mu}]$ and $[\kappa_{\nu}]$.


	\section{Lie algebras, measures and orientations}
	
	\subsection{Measures and orientations}\label{orientation}
	
	Write $\mathfrak{g}_n=\mathfrak{k}_n\oplus\mathfrak{p}_n=\widetilde{\mathfrak{k}}_n\oplus\widetilde{\mathfrak{p}}_n$,  where $\mathfrak{p}_n$ is the orthogonal complement of $\mathfrak{k}_n$ in $\mathfrak{g}_n$ and $\widetilde{\mathfrak{p}}_n$ is the orthogonal complement of $[\mathfrak{g}_n,\mathfrak{g}_n]\cap\mathfrak{k}_n$ in $[\mathfrak{g}_n,\mathfrak{g}_n]$, with respect to the Killing form of $\mathfrak{g}_n$ and $[\mathfrak{g}_n,\mathfrak{g}_n]$ respectively. Write $\C_{\iota}=\C$ (resp. $\C_{\overline{\iota}}=\C$), viewed as a $\K$-algebra via $\iota$ (resp. $\overline{\iota}$). Denote by $\mathfrak{gl}_n(\C_{\iota})$ (resp. $\mathfrak{gl}_n(\C_{\overline{\iota}})$) the Lie algebra of the Lie group $\GL_n(\C_{\iota})$ (resp. $\GL_n(\C_{\overline{\iota}})$) so that $\mathfrak{g}_n=\mathfrak{gl}_n(\C_{\iota})\oplus\mathfrak{gl}_n(\C_{\overline{\iota}})$. Clearly
	\[
	\{\varepsilon_{i,j}^{\iota}\}_{1\leq i,j\leq n},\qquad\text{resp. }\{\varepsilon_{i,j}^{\overline{\iota}}\}_{1\leq i,j\leq n}
	\]
	is a basis of $\mathfrak{gl}_n(\C_{\iota})$ (resp. $\mathfrak{gl}_n(\C_{\overline{\iota}})$), where $\varepsilon_{i,j}^{\iota}$ (resp. $\varepsilon_{i,j}^{\overline{\iota}}$) is the $n\times n$ matrix with $1$ at the $(i,j)$-th entry and $0$ elsewhere, viewed as elements of $\mathfrak{gl}_n(\C_{\iota})$ (resp. $\mathfrak{gl}_n(\C_{\overline{\iota}})$). Put
	\[
	\begin{aligned}
		e_{i,j}&:=\varepsilon_{i,j}^{\iota}+\varepsilon_{j,i}^{\overline{\iota}}, &\qquad & 1\leq i,j\leq n, \\
		e_{i}&:=\varepsilon_{i,i}^{\iota}+\varepsilon_{i,i}^{\overline{\iota}}-\varepsilon_{n,n}^{\iota}-\varepsilon_{n,n}^{\overline{\iota}},&\qquad& 1\leq i\leq n-1.
	\end{aligned}
	\]
	Then
	\[
	\{e_{i,j}\}_{1\leq i,j\leq n}\qquad\text{and}\qquad\{e_i\}_{1\leq i\leq n-1}\sqcup \{e_{i,j}\}_{1\leq i\neq j\leq n}
	\]
	are basis of $\mathfrak{p}_n$ and $\widetilde{\mathfrak{p}}_n$ respectively. Denote by 
	\[
	\{e^{\vee}_{i,j}\}_{1\leq i,j\leq n}\qquad\text{and}\qquad\{e^{\vee}_i\}_{1\leq i\leq n-1}\sqcup \{e^{\vee}_{i,j}\}_{1\leq i\neq j\leq n}
	\]
	the dual basis of $\mathfrak{p}^{\vee}_n$ and $\widetilde{\mathfrak{p}}^{\vee}_n$ with respect to above basis of $\mathfrak{p}_n$ and $\widetilde{\mathfrak{p}}_n$ respectively.
	
	We define elements $\mathfrak{o}_{n+1,n}\in\wedge^{n^2}\mathfrak{p}_n^{\vee}$ and $\mathfrak{o}_{n,n}\in\wedge^{n^2-1}\widetilde{\mathfrak{p}}_n^{\vee}$ by
	\[
	\begin{aligned}
		\mathfrak{o}_{n+1,n}=&\left(\bigwedge_{1\leq i\leq n}\bigwedge_{1\leq j\leq i}e^{\vee}_{i,j}\right)\wedge\left(\bigwedge_{\substack{2\leq j\leq n}}\bigwedge_{1\leq i\leq j-1}e^{\vee}_{i,j}\right) \\
		=&e^{\vee}_{1,1}\wedge e^{\vee}_{2,1}\wedge e^{\vee}_{2,2}\wedge\cdots\wedge e^{\vee}_{n,1}\wedge e^{\vee}_{n,2}\wedge\cdots\wedge e^{\vee}_{n,n}\\
		&\wedge e^{\vee}_{1,2}\wedge e^{\vee}_{1,3}\wedge e^{\vee}_{2,3}\wedge\cdots\wedge e^{\vee}_{1,n}\wedge e^{\vee}_{2,n}\wedge\cdots\wedge e^{\vee}_{n-1,n}
	\end{aligned}
	\]
	and
	\[
	\begin{aligned}
		\mathfrak{o}_{n,n}=&\left(\bigwedge_{2\leq i\leq n}\bigwedge_{1\leq j\leq i-1}e^{\vee}_{i,j}\right)\wedge\left(\bigwedge_{\substack{2\leq j\leq n}}\bigwedge_{1\leq i\leq j-1}e^{\vee}_{i,j}\right)\wedge\left(\bigwedge_{1\leq i\leq n-1}e_{i}^{\vee}\right)\\
		=&e_{2,1}^{\vee}\wedge e_{3,1}^{\vee}\wedge e_{3,2}^{\vee}\wedge\cdots\wedge e_{n,1}^{\vee}\wedge e_{n,2}^{\vee}\wedge\cdots\wedge e_{n,n-1}^{\vee}\\
		&\wedge e^{\vee}_{1,2}\wedge e^{\vee}_{1,3}\wedge e^{\vee}_{2,3}\wedge\cdots\wedge e^{\vee}_{1,n}\wedge e^{\vee}_{2,n}\wedge\cdots\wedge e^{\vee}_{n-1,n}\\
		&\wedge e_{1}^{\vee}\wedge e_{2}^{\vee}\wedge\cdots\wedge e_{n-1}^{\vee}.
	\end{aligned}
	\]
	Following the calculation in \cite[Section 3.2]{HMN}, $\mathfrak{o}_{n+1,n}$ (resp. $\mathfrak{o}_{n,n}$) determines an orientation $\mathbf{o}_{n+1,n}\in\mathfrak{O}_{n+1,n}$ (resp. $\mathbf{o}_{n,n}\in\mathfrak{O}_{n,n}$) of the manifold $\GL_n(\K)/K_n$ (resp. $\GL_n(\K)/\widetilde{K}_n$). 
	
	We fix a distinguished measure $\mathrm{d}^{\circ}g\in\mathfrak{M}_{n}$ to be 
	\[
	\mathrm{d}^{\circ}g:=\frac{\Gamma_{\C}(1)\Gamma_{\C}(2)\cdots\Gamma_{\C}(n)}{|\det g|_{\K}^n}\cdot\prod_{1\leq i,j\leq n}\mathrm{d}g_{i,j},\quad g=[g_{i,j}]_{1\leq i,j\leq n}\in\GL_{n}(\K)
	\]
	where $\mathrm{d}g_{i,j}$ are the self-dual Haar measure with respect to $\psi_{\K}$ and $\Gamma_{\C}(s)=2(2\pi)^{-s}\Gamma(s)$ with $\Gamma(s)$ the standard gamma function. One checks that $\mathrm{d}^{\circ}g$ is the same as the measure chosen in \cite[Section 2.2]{IM}. By abusing notation, we also write $\mathrm{d}^{\circ}g$ for the quotient measure on both $\GL_n(\K)/K_n$ and $\GL_n(\K)/\widetilde{K}_n$. Recall that by identification \eqref{orientationid}, $\mathfrak{o}_{n+1,n}\otimes\mathbf{o}_{n+1,n}$ (resp. $\mathfrak{o}_{n,n}\otimes\mathbf{o}_{n,n}$) defines an invariant measure on $\GL_n(\K)/K_n$ (resp. $\GL_n(\K)/\widetilde{K}_n$). It is straightforward to compute that
	\begin{equation}
		\label{measureid}
		\begin{aligned}
			\mathfrak{o}_{n+1,n}\otimes\mathbf{o}_{n+1,n}&=2^{-n(n+1)}\cdot\mathrm{i}^{-b_n}\cdot\mathrm{d}^{\circ}g,&\qquad&n'=n+1,\\
			\mathfrak{o}_{n,n}\otimes\mathbf{o}_{n,n}&=2^{-n(n+1)}\cdot\mathrm{i}^{-b_n}\cdot\mathrm{d}^{\circ}g,&\qquad&n'=n.
		\end{aligned}
	\end{equation}
	The detailed computation for the case $n'=n+1$ is provided in \cite[Section 3.2]{HMN} and the case $n'=n$ is similar.
	
	\subsection{The $K_n$-types $\Xi_n$ and $\Xi_n'$}
	
	Recall that $\widetilde{\mathfrak{p}}_n$ is an irreducible $K_n$-representation of highest weight 
	\[
	(1,0,\dots,0,-1)
	\]
	with the adjoint action of $K_n$. Let $\Xi_n$ be an irreducible $K_n$-representation of highest weight 
	\[
	(n-1,n-3,\dots,1-n),
	\]
	and $\Xi_n'$ an irreducible $K_n$-representation of highest weight 
	\[
	(n-1,-1,\dots,-1).
	\]
	Then $\Xi^{\vee}_n$ occurs with multiplicity one in $\wedge^{b_n}\widetilde{\mathfrak{p}}^{\vee}_n$ by \cite[Lemma 4.5]{DX}, and $\Xi_n'^{\vee}$ occurs with multiplicity one in $\wedge^{c_n}\widetilde{\mathfrak{p}}^{\vee}_n$ by \cite[Lemma 4.3]{DX}. Moreover,
	\[
	\omega_n:=\bigwedge_{2\leq i\leq n}\bigwedge_{1\leq j\leq i-1}e_{i,j}^{\vee}=e_{2,1}^{\vee}\wedge e_{3,1}^{\vee}\wedge e_{3,2}^{\vee}\wedge\cdots\wedge e_{n,1}^{\vee}\wedge e_{n,2}^{\vee}\wedge\cdots\wedge e_{n,n-1}^{\vee}
	\]
	is a highest weight vector of the $K_n$-type $\Xi^{\vee}_n$ in $\wedge^{b_n}\widetilde{\mathfrak{p}}^{\vee}_n$, and
	\[
	\omega_n':=\bigwedge_{1\leq j\leq n-1}e_{n,j}^{\vee}=e_{n,1}^{\vee}\wedge e_{n,2}^{\vee}\wedge\cdots\wedge e_{n,n-1}^{\vee}
	\]
	is a highest weight vector of the $K_n$-type $\Xi_n'^{\vee}$ in $\wedge^{c_n}\widetilde{\mathfrak{p}}^{\vee}_n$.

	\subsection{The Gelfand-Tsetlin basis}
	
	For a dominant weight 
	\[
	\lambda=(\lambda_1,\lambda_2,\dots,\lambda_n)\in\Z^n,\qquad\lambda_1\geq\lambda_2\geq\cdots\geq\lambda_n,
	\]
	denote by $\Xi(\lambda)$ an irreducible $K_n$-representation of highest weight $\lambda$. Let $\RG(\lambda)$ be the set of integral triangular arrays
	\[
	M=(m_{i,j})_{1\leq i\leq j\leq n},\qquad m_{i,j}\in\Z
	\]
	such that $m_{i,n}=\lambda_i$ for $1\leq i\leq n$ and $m_{j,k}\geq m_{j,k-1}\geq m_{j+1,k}$ for $1\leq j<k\leq n$. By Weyl's unitary trick \cite[Proposition 7.15]{Kn02}, we can shift freely between $\RU(n)$-representations and holomorphic $\GL_n(\C)$-representations. Then $\Xi(\lambda)$ has a (rational) Gelfand-Tsetlin basis
	\[
	\{\xi_M\}_{M\in\RG(\lambda)}
	\]
	as defined in \cite[Section 2.5]{IM} (see also \cite{GT50, Z85}), whose definition will not be recalled here. Let
	\[
	H(\lambda)=(h_{i,j})_{1\leq i\leq j\leq n}\in\RG(\lambda)\qquad\text{with }h_{i,j}=\lambda_i.
	\]
	Then $\xi_{H(\lambda)}$ is a highest weight vector of $\Xi(\lambda)$. For each $M\in\RG(\lambda)$, denote $M^{\vee}=(m^{\vee}_{i,j})_{1\leq i\leq j\leq n}$ with $m^{\vee}_{i,j}=-m_{n+1-i,j}$. We put
	\begin{equation}\label{r}
		\begin{aligned}
			\mathrm{r}(M)&=\prod_{1\leq i\leq j<k\leq n}\frac{(m_{i,k}-m_{j,k-1}-i+j)!(m_{i,k-1}-m_{j+1,k}-i+j)!}{(m_{i,k-1}-m_{j,k-1}-i+j)!(m_{i,k}-m_{j+1,k}-i+j)!}
		\end{aligned}
	\end{equation}
	and
	\begin{equation}\label{q}
		\mathrm{q}(M)=\sum_{1\leq i\leq j\leq n-1}m_{i,j}.
	\end{equation}

	We apply above discussions to the representations $\Xi_n$, $\Xi_n'$ and their contragredient. Write
	\[
	\begin{aligned}
		\RG_n&:=\RG\left((n-1,n-3,\dots,1-n)\right),\\
		\RG_n'&:=\RG\left((n-1,-1,\dots,-1)\right),\quad\check{\RG}'_n:=\RG\left((1,\dots,1,1-n)\right),
	\end{aligned}
	\]
	and
	\[
	\begin{aligned}
		H_n&:=H\left((n-1,n-3,\dots,1-n)\right)\in\RG_n,\\
		H_n'&:=H\left((n-1,-1,\dots,-1)\right)\in\RG'_n,\quad \check{H}_n':=H\left((1,\dots,1,1-n)\right).
	\end{aligned}
	\]
	We fix an inclusion $\RI_n:\Xi^{\vee}_n\hookrightarrow\wedge^{b_n}\widetilde{\mathfrak{p}}_n^{\vee}$ such that $\RI_n(\xi_{H_n})=\omega_n$ and denote $\omega_{M}:=\RI_n(\xi_{M})$ for each $M\in\RG_n$. Similarly, we fix an inclusion $\RI_n':\Xi'^{\vee}_n\hookrightarrow\wedge^{c_n}\widetilde{\mathfrak{p}}_n^{\vee}$ such that $\RI_n(\xi_{\check{H}'_n})=\omega'_n$ and denote $\omega'_{M}:=\RI'_n(\xi_{M})$ for each $M\in\check{\RG}'_n$.

	\section{Generators for cohomology spaces}
	
	\subsection{Whittaker functionals}
	\label{whittaker}
	
	Let $\delta=(\delta_1,\dots,\delta_n)\in\Z^n$ and $\lambda=(\lambda_1,\dots,\lambda_n)\in\C^n$. Consider the principal series representation
	\[
	I(\delta;\lambda):=\mathrm{Ind}^{\GL_n(\K)}_{\overline{\RB}_n(\K)}\left(\iota^{\frac{\lambda_1+\delta_1}{2}}\overline{\iota}^{\frac{\lambda_1-\delta_1}{2}}\otimes\cdots\otimes\iota^{\frac{\lambda_n+\delta_n}{2}}\overline{\iota}^{\frac{\lambda_n-\delta_n}{2}}\right).
	\]
	For example,
	\[
	\pi_{\mu}= I\left(\widetilde{\mu}_1^{\iota}-\widetilde{\mu}_n^{\overline{\iota}},\dots,\widetilde{\mu}_n^{\iota}-\widetilde{\mu}_1^{\overline{\iota}};\widetilde{\mu}_1^{\iota}+\widetilde{\mu}_n^{\overline{\iota}},\dots,\widetilde{\mu}_n^{\iota}+\widetilde{\mu}_1^{\overline{\iota}}\right).
	\]
	Recall the Jacquet integral 
	\[
	\CJ_{\psi_{n,\K}}:I(\delta;\lambda)\to\C,\qquad f\mapsto\int_{\RN_n(\K)}f(x)\psi_{n,\K}^{-1}(x)\mathrm{d}x,
	\]
	which converges absolutely when
	\[
	\Re(\lambda_1)<\Re(\lambda_2)<\cdots<\Re(\lambda_n).
	\]
	By means of the holomorphic continuation (\cite[Theorem 15.4.1]{Wa2}), we obtain a Whittaker functional
	\[
	\CJ_{\psi_{n,\K}}\in\mathrm{Hom}_{\RN_n(\K)}\left(I(\delta;\lambda),\psi_{n,\K}\right).
	\]
	
	Following \cite[Section 2.4.1, Section 5.2]{HMN}, we normalize the Whittaker functional 
	\[
	\lambda_{\mu}\in\mathrm{Hom}_{\RN_{n'}(\K)}(\pi_{\mu},\psi_{n',\K}),\qquad\lambda_{\nu}\in\mathrm{Hom}_{\RN_{n}(\K)}(\pi_{\nu},\overline{\psi_{n,\K}})
	\]
	by
	\[
	\begin{aligned}
		\lambda_{\mu}:=\mathrm{i}^{d_{\mu}}\cdot\Gamma_{\C}(\mu)\cdot\CJ_{\psi_{n',\K}},\qquad \lambda_{\nu}:=\mathrm{i}^{d_{\nu}}\cdot\Gamma_{\C}(\nu)\cdot\CJ_{\overline{\psi_{n,\K}}},
	\end{aligned}
	\]
	where
	\[
	\begin{aligned}
		d_{\mu}&=\sum_{i=1}^{n'}(n'-i)(\widetilde{\mu}_i^{\iota}-\widetilde{\mu}_{n'+1-i}^{\overline{\iota}}),\\
		\Gamma_{\C}(\mu)&=\prod_{1\leq i<j\leq n'}\Gamma_{\C}\left(\max\left\{\widetilde{\mu}_j^{\iota}-\widetilde{\mu}_i^{\iota},\widetilde{\mu}_{n'+1-j}^{\overline{\iota}}-\widetilde{\mu}_{n'+1-i}^{\overline{\iota}}\right\}+1\right), 
	\end{aligned}
	\]
	and $d_{\nu}$, $\Gamma_{\C}(\nu)$ are defined by the same formulas respectively.

	\subsection{Generic cohomological representations}\label{generic}
	
	For every positive integer $n$, we write $0_n$ for the zero element of the abelian group $\Z^n\times\Z^n$. By \cite[Lemma 4.2]{DX}, the $K_n$-representation $\Xi_n$ occurs with multiplicity one in $\pi_{0_n}$. We fix an inclusion $\RJ_n:\Xi_n\hookrightarrow\pi_{0_n}$ such that $\RJ_n(\xi_{H_n})(1_n)=1$ and denote $f_M:=\RJ_n(\xi_M)$ for each $M\in\RG_n$. By \cite[Proposition 9.4.3]{Wa1}, we have that
	\begin{equation}\label{W1}
		\RH^{b_n}\left(\mathfrak{g}_n,\widetilde{K}_n;\pi_{0_n}\right)=\left(\wedge^{b_n}\widetilde{\mathfrak{p}}^{\vee}_n\otimes\pi_{\mu}\right)^{K_n}=\left(\RI_n(\Xi^{\vee}_n)\otimes\RJ_n(\Xi_n)\right)^{K_n}.
	\end{equation}
	Following \cite[(2.61)]{HMN}, we fix a generator
	\[
	[\kappa_{0_n}]=\sum_{M\in\RG_n}\frac{(-1)^{\mathrm{q}(M)}}{\mathrm{r}(M)}\omega_{M^{\vee}}\otimes f_{M}\in\RH^{b_n}\left(\mathfrak{g}_n,\widetilde{K}_n;\pi_{0_n}\right).
	\]
	
	With respect to the fixed Whittaker functionals
	\[
	\begin{aligned}
		\lambda_{\mu}&\in\mathrm{Hom}_{\RN_{n'}(\K)}(\pi_{\mu},\psi_{n',\K}),&\qquad\lambda_{\nu}&\in\mathrm{Hom}_{\RN_{n}(\K)}(\pi_{\nu},\overline{\psi_{n,\K}}),\\
		\lambda_{0_{n'}}&\in\mathrm{Hom}_{\RN_{n'}(\K)}(\pi_{0_{n'}},\psi_{n',\K}),&\qquad\lambda_{0_n}&\in\mathrm{Hom}_{\RN_{n}(\K)}(\pi_{0_n},\overline{\psi_{n,\K}}),
	\end{aligned}
	\]
	we have translation maps (\cite[Proposition 1.1]{JLSa})
	\[
	\jmath_{\mu}\in\mathrm{Hom}_{\GL_n(\K)}(\pi_{0_{n'}},F_{\mu}^{\vee}\otimes\pi_{\mu}),\qquad
	\jmath_{\nu}\in\mathrm{Hom}_{\GL_n(\K)}(\pi_{0_{n}},F_{\mu}^{\vee}\otimes\pi_{\nu}),
	\]
	which induce linear isomorphisms
	\[
	\begin{aligned}
		\jmath_{\mu}:&\RH^{b_{n'}}\left(\mathfrak{g}_{n'},\widetilde{K}_{n'};\pi_{0_{n'}}\right)\xrightarrow{\sim}\RH^{b_{n'}}\left(\mathfrak{g}_{n'},\widetilde{K}_{n'};F_{\mu}^{\vee}\otimes\pi_{\mu}\right),\\
		\jmath_{\nu}:&\RH^{b_n}\left(\mathfrak{g}_n,\widetilde{K}_n;\pi_{0_n}\right)\xrightarrow{\sim}\RH^{b_n}\left(\mathfrak{g}_n,\widetilde{K}_n;F_{\nu}^{\vee}\otimes\pi_{\nu}\right).
	\end{aligned}
	\]
	We then define
	\[
	\begin{aligned}
		[\kappa_{\mu}]&:=\jmath_{\mu}([\kappa_{0_{n'}}])\in\RH^{b_{n'}}\left(\mathfrak{g}_{n'},\widetilde{K}_{n'};F_{\mu}^{\vee}\otimes\pi_{\mu}\right),\\
		[\kappa_{\nu}]&:=\jmath_{\nu}([\kappa_{0_{n}}])\in\RH^{b_{n}}\left(\mathfrak{g}_{n},\widetilde{K}_{n};F_{\nu}^{\vee}\otimes\pi_{\nu}\right).
	\end{aligned}
	\]
	
	\subsection{Degenerate principal series representations}\label{induced}
	
	Denote by $\eta_0$ the trivial character and $\chi_0:=\overline{\iota}|_{\K^{\times}}$. By \cite[Lemma 4.1]{DX}, the $K_n$-representation $\Xi_n'$ occurs with multiplicity one in $I_{\eta_0,\chi_0}$. Denote by $w_n\in\GL_n(\Z)$ the matrix with ones on anti-diagonal and zeros elsewhere. We fix an inclusion $\RJ_n':\Xi_n'\hookrightarrow I_{\eta_0,\overline{\chi}_0}$ such that $\RJ_n'(\xi_{H'_n})(w_n)=1$ and denote $\varphi_M:=\RJ_n'(\xi_{M})$ for each $M\in\RG_n'$. 
	
	Note that $I_{\eta_0,\chi_0}$ is unitarizable after twisting by the character $|\det\cdot|_{\K}^{\frac{1}{2}}$. Hence by \cite[Proposition 9.4.3]{Wa1}, we have that
	\begin{equation}\label{W2}
		\RH^{c_n}\left(\mathfrak{g}_n,\widetilde{K}_n;I_{\eta_0,\chi_0}\right)=\left(\wedge^{c_n}\widetilde{\mathfrak{p}}^{\vee}_n\otimes I_{\eta_0,\chi_0}\right)^{K_n}=\left(\RI_n(\Xi'^{\vee}_n)\otimes\RJ_n(\Xi'_n)\right)^{K_n}.
	\end{equation}
	We fix a generator
	\[
	[\kappa_{\eta_0,\chi_0}]=\sum_{M\in\RG_n'}\frac{(-1)^{\mathrm{q}(M)}}{\mathrm{r}(M)}\omega'_{M^{\vee}}\otimes\varphi_M\in\RH^{c_n}\left(\mathfrak{g}_n,\widetilde{K}_n;I_{\eta_0,\chi_0}\right).
	\]
	
	By \cite[Proposition 1.2]{JLSa}, we have a translation map
	\[
	\jmath_{\eta,\chi}\in\mathrm{Hom}_{\GL_n(\K)}(I_{\eta_0,\chi_0},F_{\eta,\chi}^{\vee}\otimes I_{\eta,\chi}),
	\]
	which induces a linear isomorphism
	\[
	\begin{aligned}
		\jmath_{\eta,\chi}:\RH^{c_n}\left(\mathfrak{g}_n,\widetilde{K}_n;I_{\eta_0,\chi_0}\right)\xrightarrow{\sim}\RH^{c_n}\left(\mathfrak{g}_n,\widetilde{K}_n;F_{\eta,\chi}^{\vee}\otimes I_{\eta,\chi}\right).
	\end{aligned}
	\]
	We then define
	\[
	[\kappa_{\eta,\chi}]:=\jmath_{\eta,\chi}([\kappa_{\eta_0,\chi_0}])\in\RH^{c_n}\left(\mathfrak{g}_n,\widetilde{K}_n;F_{\eta,\chi}^{\vee}\otimes I_{\eta,\chi}\right).
	\]
	
	\section{Computation of archimedean modular symbols}
	
	\subsection{The $\GL(n)\times\GL(n+1)$ case}
	\label{n+1}
	
	We sketch the proof of Theorem \ref{mainthm}(1) following \cite{HMN} for completeness. In view of the archimedean period relation \cite[Theorem 1.6]{JLSa} and our choice of generators in Section \ref{generic}, it suffices to compute
	\[
	\wp_{0_{n+1},0_n,\mathbf{1}}\left([\kappa_{0_{n+1}}]\otimes[\kappa_{0_n}]\otimes[\kappa_{\mathbf{1}}]\otimes\mathbf{o}_{n+1,n}\right),
	\]
	where $\mathbf{1}$ is the trivial character of $\K^{\times}$.
	
	Note that the embedding $\GL_n\hookrightarrow\GL_{n+1}$ induces an inclusion $\mathfrak{p}_n\hookrightarrow\mathfrak{p}_{n+1}$, which further induces a restriction map
	\[
	\wedge^{b_{n+1}}\mathfrak{p}_{n+1}^{\vee}\to\wedge^{b_{n+1}}\mathfrak{p}_{n}^{\vee}.
	\]
	Taking the composition of the above restriction map with the usual wedge product we obtain a map
	\[
	\begin{aligned}
		\wedge(\cdot,\cdot):\wedge^{b_{n+1}}\mathfrak{p}_{n+1}^{\vee}\otimes\wedge^{b_n}\mathfrak{p}_n^{\vee}&\to\wedge^{n^2}\mathfrak{p}_n^{\vee},\\
		\omega\otimes\omega'&\mapsto\omega\wedge\omega'.
	\end{aligned}
	\]
	In view of \eqref{W1}, this induces the map
	\[
	\begin{aligned}	&\,\RH^{b_{n+1}}(\mathfrak{g}_{n+1},\widetilde{K}_{n+1};\pi_{0_{n+1}})\otimes\RH^{b_{n}}(\mathfrak{g}_{n},\widetilde{K}_{n};\pi_{0_{n}})\\
		=&\,\left(\wedge^{b_{n+1}}\widetilde{\mathfrak{p}}_{n+1}^{\vee}\otimes\pi_{0_{n+1}}\right)^{K_{n+1}}\otimes\left(\wedge^{b_{n}}\widetilde{\mathfrak{p}}_{n}^{\vee}\otimes\pi_{0_{n}}\right)^{K_{n}}\\
		\xrightarrow{\wedge(\cdot,\cdot)}&\,\left(\wedge^{n^2}\mathfrak{p}_n^{\vee}\otimes\pi_{0_{n+1}}\otimes\pi_{0_n}\right)^{K_n},
	\end{aligned}
	\]
	which will also be denoted by $\wedge(\cdot,\cdot)$. 
	
	Since $\wedge^{n^2}\mathfrak{p}_n^{\vee}$ is generated by the $K_n$-invariant vector $\mathfrak{o}_{n+1,n}$, by \cite[Lemma 4.2]{IM}, there exists $C_{n+1,n}\in\C$ such that
	\[
	\wedge([\kappa_{0_{n+1}}]\otimes[\kappa_{0_n}])=C_{n+1,n}\cdot\mathfrak{o}_{n+1,n}\otimes\left(\sum_{M\in\RG_n}\frac{(-1)^{\mathrm{q}(M)}}{\mathrm{r}(M)}f_{\widetilde{M}}\otimes f_{M^{\vee}}\right),
	\]
	where $\widetilde{M}=(\widetilde{m}_{i,j})_{1\leq i\leq j\leq n}\in\RG_{n+1}$ is given by $\widetilde{m}_{i,n+1}=n+2-2i$ for $1\leq i\leq n+1$ and $\widetilde{m}_{i,j}=m_{i,j}$ for $1\leq i\leq j\leq n$. One calculates that (\cite[Proposition 2.18]{HMN}) 
	\[
	C_{n+1,n}=(-1)^{b_{n+1}}.
	\]
	Then by \eqref{measureid} and the definition of archimedean modular symbol, we have that
	\[
	\begin{aligned}
		&\wp_{0_{n+1},0_n,\mathbf{1}}\left([\kappa_{0_{n+1}}]\otimes[\kappa_{0_n}]\otimes[\kappa_{\mathbf{1}}]\otimes\mathbf{o}_{n+1,n}\right)\\
		=&\,2^{-n(n+1)}\cdot\mathrm{i}^{-b_n}\cdot(-1)^{b_{n+1}}\left(\sum_{M\in\RG_n}\frac{(-1)^{\mathrm{q}(M)}}{\mathrm{r}(M)}\RZ^{\circ}_{0_{n+1},0_n,\mathbf{1}}\left(f_{\widetilde{M}},f_{M^{\vee}};\mathbf{1},\mathrm{d}^{\circ}g\right)\right).
	\end{aligned}
	\]
	
	Using the result of \cite[Corollary 2.10]{IM}, one verifies that
	\[
	\sum_{M\in\RG_n}\frac{(-1)^{\mathrm{q}(M)}}{\mathrm{r}(M)}\RZ^{\circ}_{0_{n+1},0_n,\mathbf{1}}\left(f_{\widetilde{M}},f_{M^{\vee}};\mathbf{1},\mathrm{d}^{\circ}g\right)=1.
	\]
	Therefore,
	\[
	\begin{aligned}
		&\wp_{0_{n+1},0_n,\mathbf{1}}\left([\kappa_{0_{n+1}}]\otimes[\kappa_{0_n}]\otimes[\kappa_{\mathbf{1}}]\otimes\mathbf{o}_{n+1,n}\right)\\
		=&\,2^{-n(n+1)}\cdot\mathrm{i}^{-\frac{n(n-1)}{2}}\cdot(-1)^{\frac{(n+1)n}{2}},
	\end{aligned}
	\]
	which completes the proof of Theorem \ref{mainthm}(1).
	
	\subsection{Reduction of the archimedean modular symbol}\label{sec:4.2}
	
	From now on we assume that $n'=n$. The rest of the paper is devoted to the proof of Theorem \ref{mainthm}(2) using the same strategy in Section \ref{n+1}. Due to the archimedean period relation \cite[Theorem 1.6]{JLSa} and our choice of generators in Sections \ref{generic} and \ref{induced}, it suffices to compute
	\[
	\wp_{0_n,0_n,\chi_0}\left([\kappa_{0_n}]\otimes[\kappa_{0_n}]\otimes[\kappa_{\eta_0,\chi_0}]\otimes\mathbf{o}_{n,n}\right).
	\]
	
	In view of \eqref{W1} and \eqref{W2}, the usual wedge product
	\[
	\begin{aligned}
		\wedge(\cdot,\cdot,\cdot):\wedge^{b_n}\widetilde{\mathfrak{p}}_n^{\vee}\otimes\wedge^{b_n}\widetilde{\mathfrak{p}}_n^{\vee}\otimes\wedge^{c_n}\widetilde{\mathfrak{p}}_n^{\vee}&\to\wedge^{n^2-1}\widetilde{\mathfrak{p}}_n^{\vee},\\
		\omega\otimes\omega'\otimes\omega''&\mapsto\omega\wedge\omega'\wedge\omega''.
	\end{aligned}
	\]
	induces a map
	\[
	\begin{aligned}
		&\RH^{b_{n}}(\mathfrak{g}_{n},\widetilde{K}_{n};\pi_{0_{n}})\otimes\RH^{b_{n}}(\mathfrak{g}_{n},\widetilde{K}_{n};\pi_{0_{n}})\otimes\RH^{c_{n}}(\mathfrak{g}_{n},\widetilde{K}_{n};I_{\eta_0,\chi_0})\\
		=&\left(\wedge^{b_n}\widetilde{\mathfrak{p}}_n^{\vee}\otimes\pi_{0_n}\right)^{K_n}\otimes\left(\wedge^{b_n}\widetilde{\mathfrak{p}}_n^{\vee}\otimes\pi_{0_n}\right)^{K_n}\otimes\left(\wedge^{c_n}\widetilde{\mathfrak{p}}_n^{\vee}\otimes I_{\eta_0,\chi_0}\right)^{K_n}\\
		\xrightarrow{\wedge(\cdot,\cdot,\cdot)}&\left(\wedge^{n^2-1}\widetilde{\mathfrak{p}}_n^{\vee}\otimes\pi_{0_n}\otimes\pi_{0_n}\otimes I_{\eta_0,\chi_0}\right)^{K_n}
	\end{aligned}
	\]
	which will also be denoted by $\wedge(\cdot,\cdot,\cdot)$.
	
	Note that $\wedge^{n^2-1}\widetilde{\mathfrak{p}}_n^{\vee}$ is generated by the $K_n$-invariant vector $\mathfrak{o}_{n,n}$. By \cite[Lemma 4.3]{IM}, there exists a constant $C_{n,n}\in\C$ such that
	\begin{equation}\label{cnn}
		\begin{aligned}
			\wedge\left([\kappa_{0_n}]\otimes\right.&\left.[\kappa_{0_n}]\otimes[\kappa_{\eta_0,\chi_0}]\right)=C_{n,n}\cdot\mathfrak{o}_{n,n}\\
			\otimes&\left(\sum_{M\in\RG_n}\sum_{M'\in\RG_n}\sum_{P\in\RG_n'}\frac{(-1)^{\mathrm{q}(M')}}{\mathrm{r}(M')}\mathrm{c}_{M'}^{M-1,P+1}\cdot f_M\otimes f_{M'^{\vee}}\otimes\varphi_{P}\right),
		\end{aligned}
	\end{equation}
	where the coefficients $\mathrm{c}_{M'}^{M-1,P+1}$ are rational numbers explicitly determined by \cite[Proposition 2.13]{IM}. Here we write $M-1:=(m_{i,j}-1)_{1\leq i,j\leq n}$ and $P+1:=(p_{i,j}+1)_{1\leq i,j\leq n}$ for $M=(m_{i,j})_{1\leq i\leq j\leq n}\in\RG_n$ and $P=(p_{i,j})_{1\leq i\leq j\leq n}\in\RG_n'$ respectively. Then by \eqref{measureid} and definition of archimedean modular symbol, we have that
	\begin{equation}\label{reduction}
		\begin{aligned}
			&\wp_{0_n,0_n,\chi_0}\left([\kappa_{0_n}]\otimes[\kappa_{0_n}]\otimes[\kappa_{\eta_0,\chi_0}]\otimes\mathbf{o}_{n,n}\right)\\
			=&\,2^{-n(n+1)}\cdot\mathrm{i}^{-b_n}\cdot C_{n,n}\cdot\\
			&\left(\sum_{M\in\RG_n}\sum_{M'\in\RG_n}\sum_{P\in\RG_n'}\frac{(-1)^{\mathrm{q}(M')}}{\mathrm{r}(M')}\mathrm{c}_{M'}^{M-1,P+1}\cdot\RZ^{\circ}_{0_n,0_n,\chi_0}\left(f_M,f_{M'^{\vee}},\varphi_P;\chi_0,\mathrm{d}^{\circ}g\right)\right).
		\end{aligned}
	\end{equation}

	\subsection{Calculation of the constant $C_{n,n}$}\label{sec:4.3}
	
	In this subsection, we calculate the constant $C_{n,n}$ and show that
	\begin{equation}
		\label{CNN}
		C_{n,n}=(-1)^{\frac{n(n-1)(n-2)}{6}}\cdot(n!)^{-1}\cdot(\RC^{\circ})^{-1},
	\end{equation}
	where
	\[
	\RC^{\circ}=\prod_{1\leq i<j\leq n}\frac{3j-3i-1}{3j-3i+1}.
	\]
	
	Let $P_{\circ}=(p_{i,j})_{1\leq i\leq j\leq n}\in\RG_n'$ such that $p_{1,j}=j-1$ for $1\leq j\leq n$ and $p_{i,j}=-1$ for $2\leq i\leq j\leq n$. We calculate the constant $C_{n,n}$ by comparing the coefficients of the term
	\[
	f_{H_n}\otimes f_{H_n^{\vee}}\otimes \varphi_{P_{\circ}}
	\]
	on both sides of \eqref{cnn}. 
	
	\begin{lemp}We have that
		\label{lemA}
		\[
		(-1)^{\frac{n(n-1)(n+1)}{6}}\cdot(\RC^{\circ})^{-1}\cdot	\omega_{H^{\vee}_n}\wedge\omega_{H_n}\wedge\omega'_{P^{\vee}_{\circ}}=C_{n,n}\cdot\mathfrak{o}_{n,n}.
		\]
	\end{lemp}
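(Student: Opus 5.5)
To prove the lemma, I would compute $\omega_{H_n^\vee}$, $\omega_{H_n}$ and $\omega'_{P_\circ^\vee}$ explicitly as elements of the exterior algebra of $\widetilde{\mathfrak p}_n^\vee$, and then take their wedge product.

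\emph{The two highest/lowest weight vectors.} By definition $\omega_{H_n}=\omega_n=\bigwedge_{i>j}e_{i,j}^\vee$. The array $H_n^\vee$ is the lowest weight array of $\Xi_n^\vee$. So $\omega_{H_n^\vee}$ is a lowest weight vector of $\RI_n(\Xi_n^\vee)$, and hence a scalar multiple of the ``opposite'' monomial $\bigwedge_{i<j}e_{i,j}^\vee$. This monomial is the only wedge of $b_n$ basis vectors with that weight. To find the scalar I would use the explicit lowering formulas for the rational Gelfand--Tsetlin basis from \cite[Section 2.5]{IM}. Applying the long Weyl element, or equivalently a chain of lowering operators $E_{i+1,i}$, to $\xi_{H_n}$ reaches $\xi_{H_n^\vee}$ with a computable product of factorial ratios. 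Alternatively, the action of $w_n\in K_n$ on $\widetilde{\mathfrak p}_n$ sends $e_{i,j}\mapsto e_{n+1-i,n+1-j}$, and $w_n\xi_{H_n}=\pm\xi_{H_n^\vee}$ in the normalized basis. This gives $\omega_{H_n^\vee}=\pm\bigwedge_{i<j}e_{i,j}^\vee$ up to a reordering sign. I expect this reordering to contribute part of the sign $(-1)^{n(n-1)(n+1)/6}$.

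\emph{The factor $\omega'_{P_\circ^\vee}$.} Once the first two factors are in hand, $\omega_{H_n^\vee}\wedge\omega_{H_n}$ already uses every off-diagonal dual vector. Hence only the component of $\omega'_{P_\circ^\vee}$ along $e_1^\vee\wedge\cdots\wedge e_{n-1}^\vee$ survives in the wedge. Now $P_\circ^\vee$ has top row $(1,\dots,1,1-n)$ and weight zero, so $\omega'_{P_\circ^\vee}$ lies in the zero-weight space of $\Xi_n'^\vee\subset\wedge^{n-1}\widetilde{\mathfrak p}_n^\vee$. That space is spanned by $e_1^\vee\wedge\cdots\wedge e_{n-1}^\vee$ together with mixed wedges of the form $e_{i,j}^\vee\wedge e_{j,i}^\vee\wedge(\text{diagonal})$. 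I would obtain $\omega'_{P_\circ^\vee}$ from $\omega'_n=\omega'_{\check H_n'}=e_{n,1}^\vee\wedge\cdots\wedge e_{n,n-1}^\vee$ by applying lowering operators in $\mathfrak k_n$ that step down the Gelfand--Tsetlin pattern, using the explicit action formulas of \cite[Proposition 2.13]{IM}. I would track only the purely diagonal part. Each lowering operator acts on $e_{n,j}^\vee$ by the coadjoint action, producing $\pm(e_j^\vee-\dots)$-type terms. The resulting determinant-like sum, normalized by the Gelfand--Tsetlin coefficients, should produce the coefficient of $e_1^\vee\wedge\cdots\wedge e_{n-1}^\vee$.

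\emph{Assembling.} The final step is to reorder $\bigwedge_{i<j}\wedge\bigwedge_{i>j}\wedge\bigwedge e_k^\vee$ into the ordering defining $\mathfrak o_{n,n}$, which costs a further sign. Combining the constants from the two steps above then gives the stated form of $C_{n,n}$, with the factor $(\RC^\circ)^{-1}$ coming from the product of factorial ratios.

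The main obstacle is the normalization step: computing the exact scalars relating $\omega_{H_n^\vee}$ and $\omega'_{P_\circ^\vee}$ to monomials, since $\RI_n,\RI'_n$ are pinned down only through highest weight vectors. The factor $\RC^\circ=\prod(3j-3i-1)/(3j-3i+1)$ suggests that it arises from $\mathrm r$-type factorial ratios for $\Xi_n$, whose top row has differences $2(j-i)$. My first attempt would be to use the $w_n$-symmetry of the rational Gelfand--Tsetlin basis, if \cite{IM} records it. Failing that, I would proceed by induction on $n$, restricting along $K_{n-1}\subset K_n$ and tracking one row of the pattern at a time.
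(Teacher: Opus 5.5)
Your plan never uses what $C_{n,n}$ actually is, and relating the wedge to $C_{n,n}$ is the entire content of the lemma. The constant $C_{n,n}$ is defined only implicitly by \eqref{cnn}. It is the scalar relating $\wedge([\kappa_{0_n}]\otimes[\kappa_{0_n}]\otimes[\kappa_{\eta_0,\chi_0}])$ to $\mathfrak{o}_{n,n}$ tensored with the specific $K_n$-invariant vector $\sum\frac{(-1)^{\mathrm{q}(M')}}{\mathrm{r}(M')}\mathrm{c}^{M-1,P+1}_{M'}f_M\otimes f_{M'^{\vee}}\otimes\varphi_P$, built from the Ishii--Miyazaki coefficients. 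Computing $\omega_{H_n^{\vee}}$, $\omega_{H_n}$ and $\omega'_{P_{\circ}^{\vee}}$ as monomials in $\wedge\widetilde{\mathfrak{p}}_n^{\vee}$, however carefully, only evaluates the left-hand side as a multiple of $\mathfrak{o}_{n,n}$. That is what Lemmas~\ref{lemB}--\ref{lemD} do, and the answer is $\pm(n!)^{-1}\mathfrak{o}_{n,n}$. It gives no information about $C_{n,n}$, so your final ``assembling'' step has nothing to compare against. Your expectation that $(\mathrm{C}^{\circ})^{-1}$ comes out of Gelfand--Tsetlin factorial ratios inside the exterior algebra is also wrong: $\mathrm{C}^{\circ}$ does not occur in the wedge product at all.

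The missing idea is a coefficient comparison in \eqref{cnn}. Expand the left side using $[\kappa_{0_n}]=\sum_M\frac{(-1)^{\mathrm{q}(M)}}{\mathrm{r}(M)}\omega_{M^{\vee}}\otimes f_M$ and the analogous expansion of $[\kappa_{\eta_0,\chi_0}]$. Then isolate the coefficient of $f_{H_n}\otimes f_{H_n^{\vee}}\otimes\varphi_{P_{\circ}}$. On the left it is $\frac{(-1)^{\mathrm{q}(H_n)+\mathrm{q}(H_n^{\vee})+\mathrm{q}(P_{\circ})}}{\mathrm{r}(H_n)\mathrm{r}(H_n^{\vee})\mathrm{r}(P_{\circ})}\,\omega_{H_n^{\vee}}\wedge\omega_{H_n}\wedge\omega'_{P_{\circ}^{\vee}}$. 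On the right it is $C_{n,n}\frac{(-1)^{\mathrm{q}(H_n^{\vee})}}{\mathrm{r}(H_n^{\vee})}\mathrm{c}^{H_n-1,P_{\circ}+1}_{H_n^{\vee}}\,\mathfrak{o}_{n,n}$. The factor $\mathrm{C}^{\circ}$ enters solely through \cite[Proposition 2.13(ii)]{IM}, which gives $\mathrm{c}^{H_n-1,P_{\circ}+1}_{H_n^{\vee}}=\mathrm{r}(P_{\circ})^{-1}\mathrm{C}^{\circ}$. That proposition concerns these invariant-vector coefficients, not lowering operators; for the latter you would want \cite[(2.19)--(2.21)]{IM}. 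Finally, \eqref{r} and \eqref{q} give $\mathrm{r}(H_n)=1$, $\mathrm{q}(H_n)=\frac{n(n-1)(n+1)}{6}$ and $\mathrm{q}(P_{\circ})=0$. So the sign in the lemma is $(-1)^{\mathrm{q}(H_n)}$, coming from the normalization of $[\kappa_{0_n}]$, not a reordering sign in the exterior algebra. In particular, the lemma is a formal identity that holds whatever the wedge product equals. The explicit exterior-algebra computation you propose is needed only afterwards, to evaluate the wedge and thereby pin down $C_{n,n}$.
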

	
	\begin{proof}
		On the left hand side of \eqref{cnn} we have the term
		\[
		\frac{(-1)^{\mathrm{q}(H_n)+\mathrm{q}(H_n^{\vee})+\mathrm{q}(P_{\circ})}}{\mathrm{r}(H_n)\mathrm{r}(H_n^{\vee})\mathrm{r}(P_{\circ})}\cdot(\omega_{H^{\vee}_n}\wedge\omega_{H_n}\wedge\omega'_{P^{\vee}_{\circ}})\otimes(f_{H_n}\otimes f_{H_n^{\vee}}\otimes \varphi_{P_{\circ}}),
		\]
		while on the right hand side of \eqref{cnn} we have 
		\[
		C_{n,n}\cdot\frac{(-1)^{\mathrm{q}(H_n^{\vee})}}{\mathrm{r}(H_n^{\vee})}\mathrm{c}_{H_n^{\vee}}^{H_n -1,P_{\circ}+1}\cdot\mathfrak{o}_{n,n}\otimes(f_{H_n}\otimes f_{H_n^{\vee}}\otimes \varphi_{P_{\circ}}).
		\]
		Therefore,
		\[
		\begin{aligned}
			\frac{(-1)^{\mathrm{q}(H_n)+\mathrm{q}(P_{\circ})}}{\mathrm{r}(H_n)\mathrm{r}(P_{\circ})}\cdot(\omega_{H^{\vee}_n}\wedge\omega_{H_n}\wedge\omega'_{P^{\vee}_{\circ}})=C_{n,n}\cdot\mathrm{c}_{H_n^{\vee}}^{H_n,P_{\circ}}\cdot\mathfrak{o}_{n,n}.
		\end{aligned}
		\]
		By \cite[Proposition 2.13(ii)]{IM}, we have
		\[
		\mathrm{c}^{H_n-1,P_{\circ}+1}_{H^{\vee}_n}=\mathrm{r}(P_{\circ}+1)^{-1}\cdot\mathrm{C}^{\circ}=\mathrm{r}(P_{\circ})^{-1}\cdot\mathrm{C}^{\circ},
		\]
		where
		\[
		\begin{aligned}
			\mathrm{C}^{\circ}=	&\mathrm{C}^{\circ}\left((n-1,n-3,\cdots,1-n);(n-2,n-4,\cdots,-n)\right)\\
			=&\prod_{1\leq i<j\leq n}\frac{3j-3i-1}{3j-3i+1}
		\end{aligned}
		\] 
		is computed by \cite[(2.37)]{IM}. Using the formulas \eqref{r} and \eqref{q}, the straightforward computation shows that
		\[
		\mathrm{r}(H_n)=1,\qquad\mathrm{q}(H_n)=\frac{n(n-1)(n+1)}{6},\qquad\mathrm{q}(P_{\circ})=0,
		\]
		which proves the lemma.
	\end{proof}
	
	It remains to calculate the wedge product $\omega_{H^{\vee}_n}\wedge\omega_{H_n}\wedge\omega'_{P^{\vee}_{\circ}}$. 
	
	\begin{lemp}\label{lemB}
		We have that
		\[
		\mathfrak{o}_{n,n}=\omega_{H^{\vee}_n}\wedge\omega_{H_n}\wedge\left(e_{1}^{\vee}\wedge e_{2}^{\vee}\cdots\wedge e_{n-1}^{\vee}\right).
		\]
	\end{lemp}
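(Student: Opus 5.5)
Since $\mathfrak{o}_{n,n}=\omega_n\wedge\big(\bigwedge_{2\leq j\leq n}\bigwedge_{1\leq i\leq j-1}e^{\vee}_{i,j}\big)\wedge\big(e_1^{\vee}\wedge\cdots\wedge e_{n-1}^{\vee}\big)$ by definition, and $\omega_{H_n}=\RI_n(\xi_{H_n})=\omega_n$, the plan is to identify $\omega_{H_n^{\vee}}$ explicitly as the lowest weight vector of $\RI_n(\Xi_n^{\vee})$ and then reorder the wedge product.

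\textbf{Step 1: identify $\omega_{H_n^{\vee}}$.} The array $H_n^{\vee}$ has entries $m^{\vee}_{i,j}=-h_{n+1-i,j}$. Its bottom row is $(n-1,\dots,1-n)$, reflecting that $\Xi_n$ is self-dual. In every row the entries are as small as the interlacing allows, so $\xi_{H_n^{\vee}}$ spans the lowest weight line of $\Xi_n^{\vee}$. In $\wedge^{b_n}\widetilde{\mathfrak{p}}_n^{\vee}$ the vector $\omega_n$ has the highest weight. Applying the longest Weyl element $w_n$ (the adjoint action of the permutation matrix) sends $e_{i,j}^{\vee}$ to $\pm e^{\vee}_{n+1-i,n+1-j}$, so the lowest weight line is spanned by $\omega^-_n:=\bigwedge_{j<i}e^{\vee}_{j,i}$, the wedge over the upper-triangular positions. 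By multiplicity one, $\omega_{H_n^{\vee}}=c\cdot\omega_n^-$ for a scalar $c$. To compute $c$, I would use the explicit Gelfand--Tsetlin formulas of \cite[Section 2.5]{IM}, which say that $\xi_{H^{\vee}}$ is obtained from $\xi_H$ by a specific normalized product of lowering operators $E_{i+1,i}$. I would then apply the same lowering operators to $\omega_n$ through the adjoint action on $\widetilde{\mathfrak{p}}_n^{\vee}$, where $E_{k+1,k}$ acts on the $e_{i,j}^{\vee}$ as derivations. Alternatively, I would check that the normalization of the rational Gelfand--Tsetlin basis is compatible with the Weyl group action, $w_n\cdot\xi_H=\pm\xi_{H^{\vee}}$, and track the sign directly. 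I expect $c=\pm1$, with the sign to be pinned down.

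\textbf{Step 2: reorder.} Once $\omega_{H_n^{\vee}}=c\,\omega_n^-$ is known, note that $\omega_n^-\wedge\omega_n$ and $\omega_n\wedge\omega_n^-$ differ by $(-1)^{b_n^2}=(-1)^{b_n}$, because both factors have degree $b_n$. The ordering of factors inside $\omega_n^-$ must also be compared with the order $\bigwedge_{j}\bigwedge_{i<j}e^{\vee}_{i,j}$ used in $\mathfrak{o}_{n,n}$. These signs, together with $c$, should cancel to give exactly $\mathfrak{o}_{n,n}$; the lemma asserts the net sign is $+1$.

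\textbf{Main obstacle.} The hardest step is Step 1: determining the exact scalar $c$, including its sign, relating $\RI_n(\xi_{H_n^{\vee}})$ to $\omega_n^-$. This depends on the precise normalization of the basis in \cite{IM}. My first attempt would be an induction on $n$ through the branching $\RU(n-1)\subset\RU(n)$, which is built into the Gelfand--Tsetlin construction. In this induction one peels off the last row $e_{n,1}^{\vee},\dots,e^{\vee}_{n,n-1}$ of $\omega_n$ and the last column of $\omega_n^-$, and checks the sign contributed at each stage.
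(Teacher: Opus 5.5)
Your outline has the same shape as the paper's proof: write $\omega_{H_n^{\vee}}$ as an explicit multiple of the wedge of the upper-triangular $e^{\vee}_{i,j}$, then reorder. But it stops exactly where the content of the lemma begins. $\wedge^{n^2-1}\widetilde{\mathfrak{p}}_n^{\vee}$ is a line, and the weight-$(1-n,3-n,\dots,n-1)$ space of $\wedge^{b_n}\widetilde{\mathfrak{p}}_n^{\vee}$ is spanned by the upper-triangular wedge. So it is automatic that the right-hand side is some nonzero multiple of $\mathfrak{o}_{n,n}$; the lemma is precisely the claim that this multiple is $1$. You never compute it:
\begin{itemize}
\item The scalar $c$ is left unknown. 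You only expect $c=\pm1$, and even $|c|=1$ is not automatic for the rational Gelfand--Tsetlin basis of \cite{IM}, whose ladder operators carry rational coefficients (compare the factorials in Lemma \ref{lemC}).
\item The order of the factors in $\omega_n^-$ is not fixed, so $c$ is not even well defined.
\item No reordering sign is computed.
\end{itemize}
The last sentence of your Step 2, that ``the lemma asserts the net sign is $+1$'', invokes the statement being proved.

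The missing ingredient is the explicit formula the paper uses:
\[
\omega_{H_n^{\vee}}=(-1)^{\mathrm{q}(H_n)}\,e_{n-1,n}^{\vee}\wedge e_{n-2,n}^{\vee}\wedge e_{n-2,n-1}^{\vee}\wedge\cdots\wedge e_{1,n}^{\vee}\wedge e_{1,n-1}^{\vee}\wedge\cdots\wedge e_{1,2}^{\vee},\qquad \mathrm{q}(H_n)=\frac{n(n-1)(n+1)}{6}.
\]
Since $w_n$ is a real matrix, $\mathrm{Ad}(w_n)e_{i,j}=e_{n+1-i,n+1-j}$ with no sign. Hence the wedge on the right is exactly $w_n\cdot\omega_n$, with the factor order inherited from $\omega_n$. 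Your Weyl-element route therefore comes down to proving
\[
w_n\cdot\xi_{H_n}=(-1)^{\mathrm{q}(H_n)}\xi_{H_n^{\vee}}
\]
in the normalization of \cite{IM}; that identity is what you would have to establish.

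The remaining bookkeeping, also absent from your proposal, is as follows:
\begin{itemize}
\item Reordering the wedge into $e_{1,2}^{\vee}\wedge e_{1,3}^{\vee}\wedge e_{2,3}^{\vee}\wedge\cdots\wedge e_{n-1,n}^{\vee}$ costs $(-1)^{(n-2)(n-1)n(n+3)/12}$.
\item Moving this degree-$b_n$ block past $\omega_n$ costs $(-1)^{b_n^2}$.
\item One checks that
\[
\frac{n(n-1)(n+1)}{6}+\frac{(n-2)(n-1)n(n+3)}{12}+\frac{n^2(n-1)^2}{4}=2(n-1)\cdot\frac{n(n-1)(n+1)}{6}
\]
is even, so the net sign is indeed $+1$.
\end{itemize}
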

	
	\begin{proof}
		Recall that
		\[
		\begin{aligned}
			\omega_{H_n}=&\,\omega_n=\left(\bigwedge_{2\leq i\leq n}\bigwedge_{1\leq j\leq i-1}e_{i,j}^{\vee}\right)\\
			=&\,e_{2,1}^{\vee}\wedge e_{3,1}^{\vee}\wedge e_{3,2}^{\vee}\wedge\cdots\wedge e_{n,1}^{\vee}\wedge e_{n,2}^{\vee}\wedge\cdots\wedge e_{n,n-1}^{\vee}
		\end{aligned}
		\]
		is the highest weight vector of $\RI_n(\Xi_n)$ and we calculate the lowest weight vector
		\[
		\begin{aligned}
			\omega_{H_n^{\vee}}=&\,(-1)^{\mathrm{q}(H_n)}\\
			&\cdot e_{n-1,n}^{\vee}\wedge e_{n-2,n}^{\vee}\wedge e_{n-2,n-1}^{\vee}\wedge\cdots\wedge e_{1,n}^{\vee}\wedge e_{1,n-1}^{\vee}\wedge\cdots\wedge e_{1,2}^{\vee}\\
			=&\,(-1)^{\frac{n(n-1)(n+1)}{6}}\cdot(-1)^{\frac{(n-2)(n-1)n(n+3)}{12}}\\
			&\cdot e_{1,2}^{\vee}\wedge e_{1,3}^{\vee}\wedge e_{2,3}^{\vee}\wedge\cdots\wedge e_{1,n}^{\vee}\wedge e_{2,n}^{\vee}\wedge\cdots \wedge e_{n-1,n}^{\vee}\\
			=&(-1)^{\frac{n(n-1)(n+1)}{6}}\cdot(-1)^{\frac{(n-2)(n-1)n(n+3)}{12}}\cdot\left(\bigwedge_{1\leq i\leq n-1}\bigwedge_{2\leq i\leq n}e_{i,j}^{\vee}\right).
		\end{aligned}
		\]
		Hence,
		\[
		\begin{aligned}
			\omega_{H_n^{\vee}}\wedge\omega_{H_n}=&\,(-1)^{\frac{n(n-1)(n+1)}{6}}\cdot(-1)^{\frac{(n-2)(n-1)n(n+3)}{12}}\\
			&\cdot\left(\bigwedge_{1\leq i\leq n-1}\bigwedge_{2\leq i\leq n}e_{i,j}^{\vee}\right)\wedge\left(\bigwedge_{2\leq i\leq n}\bigwedge_{1\leq j\leq i-1}e_{i,j}^{\vee}\right)\\
			=&\,(-1)^{\frac{n(n-1)(n+1)}{6}}\cdot(-1)^{\frac{(n-2)(n-1)n(n+3)}{12}}\cdot(-1)^{\frac{n^2(n-1)^2}{4}}\\
			&\cdot \left(\bigwedge_{2\leq i\leq n}\bigwedge_{1\leq j\leq i-1}e_{i,j}^{\vee}\right)\wedge\left(\bigwedge_{1\leq i\leq n-1}\bigwedge_{2\leq i\leq n}e_{i,j}^{\vee}\right).
		\end{aligned}
		\]
		Comparing with the definition of $\mathfrak{o}_{n,n}$, the lemma follows by the fact
		\[
		(-1)^{\frac{n(n-1)(n+1)}{6}}\cdot(-1)^{\frac{(n-2)(n-1)n(n+3)}{12}}\cdot(-1)^{\frac{n^2(n-1)^2}{4}}=1.
		\]
	\end{proof}
	
	Recall that $\{\omega'_M\}_{M\in\check{\RG}'_n}$ indicates the image of the Gelfand-Tsetlin basis $\{\xi_M\}_{M\in\check{\RG}'_n}$ of $\Xi_n'^{\vee}$ under the inclusion $\RI_n':\Xi_n'^{\vee}\hookrightarrow\wedge^{c_n}\widetilde{\mathfrak{p}}_n^{\vee}$. Denote by $\{\varepsilon_{i,j}\}_{1\leq i,j\leq n}$ a basis of $\mathfrak{gl}_n(\C)$. The explicit formulas for the action of $\varepsilon_{i,j}$ on the Gelfand-Tsetlin basis are given by \cite[(2,19),(2,20),(2,21)]{IM}. We record the following one:
	\[
	\varepsilon_{j,j+1}\xi_M=\prod_{\substack{1\leq i\leq j\\M+\Delta_{i,j}\in\check{\RG}_n'}}\mathrm{a}_{i,j}(M)\xi_{M+\Delta_{i,j}},\qquad M\in\check{\RG}_n',\, 1\leq j\leq n-1,
	\]
	where
	\[
	\Delta_{i,j}=(\delta_{i',j'})_{1\leq i'\leq j'\leq n},\qquad\delta_{i',j'}=\begin{cases}
		1, & i'=i,j'=j,\\
		0, &\text{otherwise},
	\end{cases}
	\]
	and
	\[
	\mathrm{a}_{i,j}(M)=\frac{\prod_{h=1}^i(m_{h,j+1}-m_{i,j}-h+i)}{\prod_{h=1}^{i-1}(m_{h,j}-m_{i,j}-h+i)}\left(\prod_{h=2}^i\frac{m_{h-1,j-1}-m_{i,j}-h+i}{m_{h-1,j}-m_{i,j}-h+1}\right).
	\]

	\begin{lemp}\label{lemC}
		We have that
		\[
		\xi_{P_{\circ}^{\vee}}=\frac{1}{n!(n-1)!\cdots 2!}\cdot(\varepsilon_{n-1,n})^1(\varepsilon_{n-2,n-1})^2\cdots(\varepsilon_{1,2})^{n-1}\xi_{H'^{\vee}_n}.
		\]
	\end{lemp}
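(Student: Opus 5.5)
The plan is to start from $\xi_{H'^{\vee}_n}$ and move to $\xi_{P^{\vee}_{\circ}}$ by repeatedly applying the recorded raising formula for $\varepsilon_{j,j+1}$, tracking one Gelfand--Tsetlin pattern at each step. First I will write both patterns explicitly. Since $H'_n$ has $j$-th row $(n-1,-1,\dots,-1)$, its dual $H'^{\vee}_n\in\check{\RG}'_n$ has $j$-th row $(1,\dots,1,1-n)$. Thus $m_{i,j}=1$ for $i<j$ and $m_{j,j}=1-n$, so it is the lowest weight pattern. Since $P_{\circ}$ has $j$-th row $(j-1,-1,\dots,-1)$, its dual $P^{\vee}_{\circ}$ has $j$-th row $(1,\dots,1,1-j)$. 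So the two patterns agree except on the diagonal entries $m_{j,j}$ with $1\le j\le n-1$. There the entry must rise from $1-n$ to $1-j$, i.e.\ by $n-j$. This matches the exponents: $(\varepsilon_{1,2})^{n-1}$ is applied first (rightmost), then $(\varepsilon_{2,3})^{n-2}$, and so on up to $(\varepsilon_{n-1,n})^{1}$.

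Next I will check that every application produces a single basis vector, not a sum. In the formula for $\varepsilon_{j,j+1}\xi_M$, the term $\Delta_{i,j}$ with $i<j$ would raise $m_{i,j}=1$ above $m_{i,j+1}=1$, so $M+\Delta_{i,j}\notin\check{\RG}'_n$. Only $i=j$ survives. I also need that the chosen order keeps every intermediate array in $\check{\RG}'_n$. After $(\varepsilon_{1,2})^{n-1}$ we have $m_{1,1}=0$. While $m_{2,2}$ is raised from $1-n$ to $-1$, it stays between $m_{3,3}=1-n$ and $m_{1,1}=0$, and also below $m_{1,2}=1$. Inductively, when $(\varepsilon_{j,j+1})^{n-j}$ acts, $m_{j-1,j-1}=2-j$ is already fixed and $m_{j+1,j+1}=1-n$ is untouched, so the interlacing inequalities hold at every step. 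Hence
\[
(\varepsilon_{n-1,n})^1\cdots(\varepsilon_{1,2})^{n-1}\xi_{H'^{\vee}_n}=\Big(\prod_{j=1}^{n-1}\prod_{t=0}^{n-j-1}\mathrm{a}_{j,j}(M_{j,t})\Big)\xi_{P^{\vee}_{\circ}},
\]
where $M_{j,t}$ is the intermediate pattern with $m_{j,j}=1-n+t$.

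The last step, and the main obstacle, is evaluating the coefficients and showing the double product equals $n!(n-1)!\cdots2!$. I will substitute the explicit entries into $\mathrm{a}_{j,j}(M)$, writing $m=m_{j,j}$. In the first quotient we have $m_{h,j+1}=1$ for $h\le j$ and $m_{h,j}=1$ for $h<j$, so numerator and denominator cancel except for the $h=j$ factor, which is $1-m$. In the second product, the factors with $h<j$ have $m_{h-1,j-1}=m_{h-1,j}=1$ and telescope. The factor $h=j$ involves $m_{j-1,j-1}=2-j$ and should simplify by the interlacing value. I expect $\mathrm{a}_{j,j}(M_{j,t})$ to reduce to a simple linear expression in $t$, for example $(n-j-t)$ times a factor from the $(j-1)$-st level. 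Then $\prod_t$ gives a factorial, and the product over $j$ assembles into $\prod_{k=2}^{n}k!$. The delicate points are the boundary factor $h=j$ and the convention for $m_{0,\cdot}$ or an empty product when $j=1$, and getting the indices of the telescoping right. I will first check $n=2$ and $n=3$ by hand to confirm the normalisation before writing the general telescoping. Dividing by the product then gives the stated formula for $\xi_{P^{\vee}_{\circ}}$.
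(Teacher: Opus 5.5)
Your reduction is the same as the paper's. You start from the lowest-weight pattern $H'^{\vee}_n$, note that only the $i=j$ term of each $\varepsilon_{j,j+1}$ survives and that every intermediate pattern stays in $\check{\RG}'_n$, and so reduce the lemma to $\prod_{t=0}^{n-j-1}\mathrm{a}_{j,j}(M_{j,t})=(n+1-j)!$. Your justification of those first steps is correct, and more explicit than the paper's. The gap is that this last identity, which is the whole content of the lemma, is only ``expected'', and the expectations are wrong. If you substitute into the formula for $\mathrm{a}_{i,j}$ exactly as printed, the factors $2\le h\le j-1$ of the second product are $\frac{j+1-h-m}{2-h-m}$, because the numerator has shift $-h+i$ and the denominator $-h+1$. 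So they neither cancel nor telescope, and the boundary factor $h=j$ is $\frac{2-j-m}{2-j-m}=1$. Already for $n=3$ this gives $\mathrm{a}_{2,2}=1-m_{2,2}=3$ at the single step of $\varepsilon_{2,3}$. The total is then $3!\cdot 3=18$, not $3!\,2!=12$, so your planned hand check would fail.

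The printed denominator appears to be a misprint for $m_{h-1,j}-m_{i,j}-h+i$. With the printed version, the coefficient of $\xi_{M+\Delta_{1,1}+\Delta_{1,2}+\Delta_{2,2}}$ in $[\varepsilon_{2,3},[\varepsilon_{1,2},\varepsilon_{2,3}]]\xi_M$ is nonzero for generic $M$ in $\mathfrak{gl}_3$; with $-h+i$ it vanishes, as it must. With the corrected formula the computation goes through:
\begin{itemize}
\item each factor $2\le h\le j-1$ of the second product equals $1$, since $m_{h-1,j-1}=m_{h-1,j}=1$;
\item the first quotient equals $1-m$;
\item the factor $h=j$ equals $\frac{m_{j-1,j-1}-m}{m_{j-1,j}-m}=\frac{2-j-m}{1-m}$.
\end{itemize}
Hence $\mathrm{a}_{j,j}(M_{j,t})=m_{j-1,j-1}-m_{j,j}=n+1-j-t$. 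For $j=1$ the second product is empty, so no convention for $m_{0,\cdot}$ is needed, and $\mathrm{a}_{1,1}=m_{1,2}-m_{1,1}=n-t$ fits the same formula. So there is no extra factor from level $j-1$, and the linear factor is $n+1-j-t$, not $n-j-t$. This gives $\prod_{t=0}^{n-j-1}(n+1-j-t)=(n+1-j)!$, and the product over $j$ is $n!(n-1)!\cdots 2!$, which is the lemma.
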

	
	\begin{proof}
		From above explicit formula for the action of $\varepsilon_{j,j+1}$ ($1\leq j\leq n-1$), we calculate that
		\[
		\begin{aligned}
			&\,(\varepsilon_{n-1,n})^1(\varepsilon_{n-2,n-1})^2\cdots(\varepsilon_{1,2})^{n-1}\xi_{H'^{\vee}_n}\\
			=&\,\prod_{j=1}^{n-1}\prod_{k=0}^{n-1-j}\mathrm{a}_{j,j}\left(H'^{\vee}_n+\sum_{i=1}^{j-1}(n-i)\Delta_{i,i}+k\Delta_{j,j}\right)\xi_P.
		\end{aligned}
		\]
		The lemma follows by
		\[
		\prod_{k=0}^{n-1-j}\mathrm{a}_{j,j}\left(H'^{\vee}_n+\sum_{i=1}^{j-1}(n-i)\Delta_{i,i}+k\Delta_{j,j}\right)=(n+1-j)!
		\]
		from the formula of $\mathrm{a}_{j,j}$ ($1\leq j\leq n-1$). 
	\end{proof}
	
	\begin{lemp}\label{lemD}
		We have that
		\[
		\omega_{H_n}\wedge\omega_{H_n^{\vee}}\wedge\omega'_{P_{\circ}}=\frac{(-1)^{\frac{n(n-1)}{2}}}{n!}\cdot\mathfrak{o}_{n,n}.
		\]
	\end{lemp}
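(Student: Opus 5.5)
The plan is to reduce the triple wedge product to a single coefficient computation in $\wedge^{n-1}$ of the span of the $e_i^{\vee}$, and then finish with Lemma \ref{lemB}. (I read the third factor in the statement as the vector $\omega'_{P_\circ^{\vee}}$ occurring in Lemma \ref{lemA}.)

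\emph{Step 1: rewrite $\omega'_{P_\circ^{\vee}}$.} Since $\RI'_n$ is $K_n$-equivariant, and hence $\mathfrak{k}_n$-equivariant after complexification, Lemma \ref{lemC} gives
\[
\omega'_{P_\circ^{\vee}}=\frac{1}{n!(n-1)!\cdots 2!}\,(\varepsilon_{n-1,n})^1(\varepsilon_{n-2,n-1})^2\cdots(\varepsilon_{1,2})^{n-1}\,\omega'_n ,
\]
where $\omega'_n=e^{\vee}_{n,1}\wedge\cdots\wedge e^{\vee}_{n,n-1}$. Here $\varepsilon_{j,j+1}$ acts on $\widetilde{\mathfrak{p}}_n^{\vee}$ by the coadjoint action, identifying $\mathfrak{k}_n\otimes\C\cong\mathfrak{gl}_n(\C)$ exactly as in the identification used for the Gelfand--Tsetlin formulas of \cite{IM}.

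\emph{Step 2: discard the off-diagonal terms.} By the proof of Lemma \ref{lemB}, $\omega_{H_n}\wedge\omega_{H_n^{\vee}}$ is, up to sign, the wedge of all $e^{\vee}_{i,j}$ with $i\neq j$. Consequently only the component of $\omega'_{P_\circ^{\vee}}$ lying in $\wedge^{n-1}\operatorname{span}\{e_1^{\vee},\dots,e_{n-1}^{\vee}\}$ contributes. Call this component $a\cdot e_1^{\vee}\wedge\cdots\wedge e_{n-1}^{\vee}$. Reordering gives
\[
\omega_{H_n}\wedge\omega_{H_n^{\vee}}=(-1)^{b_n^2}\,\omega_{H_n^{\vee}}\wedge\omega_{H_n}=(-1)^{b_n}\,\omega_{H_n^{\vee}}\wedge\omega_{H_n}.
\]
Combined with Lemma \ref{lemB}, the left-hand side of the lemma then equals $(-1)^{n(n-1)/2}\,a\cdot\mathfrak{o}_{n,n}$. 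The whole lemma therefore reduces to proving
\[
a=\frac{1}{n!}.
\]

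\emph{Step 3: compute $a$.} First I will write down explicitly the coadjoint action of $\varepsilon_{j,j+1}$ on the basis $\{e_i^{\vee}\}\sqcup\{e^{\vee}_{k,l}\}$. It sends $e^{\vee}_{k,l}$ to $\pm e^{\vee}_{k',l'}$ with one index shifted. It sends $e^{\vee}_{j+1,j}$ or $e^{\vee}_{j,j+1}$ to a combination of diagonal duals, which must be rewritten in the basis $e_i^{\vee}$; the trace-zero relation through $e_n$ has to be tracked carefully here. Next I will track which chains of $\binom{n}{2}$ raising operators move each factor $e^{\vee}_{n,j}$ of $\omega'_n$ onto the diagonal, so that the $n-1$ factors end up as distinct diagonal elements. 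The operator word has a rigid structure: $(\varepsilon_{1,2})^{n-1}$ acts first, and so on. Because of this I expect the surviving terms to be indexed by a permutation-like choice of which factor the operators hit at each stage. Summing these terms should give $\prod_{k=2}^{n}k!/n!$, which after dividing by the normalization in Step 1 yields $1/n!$.

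\emph{Main obstacle.} The bookkeeping in Step 3 is the hard part: getting the signs of the coadjoint action right, together with the multinomial distribution of the raising operators among the wedge factors. I would first verify the claim for $n=2,3$ by hand. Then I would try an induction on $n$: apply $(\varepsilon_{1,2})^{n-1}$ first, and exploit the fact that the remaining word is the analogous word for a copy of $\GL_{n-1}$ acting on indices $2,\dots,n$. If the direct count becomes unwieldy, a fallback is to compute $a$ as a $K_n$-invariant pairing instead. The idea is to evaluate the diagonal component through the contraction of $\omega'_{P_\circ^{\vee}}$ with $e_1\wedge\cdots\wedge e_{n-1}$. This contraction can be moved, by adjointness, onto the lowering operators acting on $e_1\wedge\cdots\wedge e_{n-1}$, which is easier to control.
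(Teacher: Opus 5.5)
Your outline matches the paper's: Lemma \ref{lemC}, keep only the diagonal component, then finish with Lemma \ref{lemB}. Your Step 2 reduction is also fine. However, Step 1 applies the operator word to the wrong vector, and as written the argument collapses.

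Lemma \ref{lemC} applies $(\varepsilon_{n-1,n})^1\cdots(\varepsilon_{1,2})^{n-1}$ to $\xi_{H'^{\vee}_n}$, the \emph{lowest} weight vector of $\Xi_n'^{\vee}$. By contrast, $\omega_n'=\RI_n'(\xi_{\check{H}'_n})$ is the \emph{highest} weight vector. The word has total weight $(n-1,-1,\dots,-1)$. It moves the lowest weight $(1-n,1,\dots,1)$ to $0$, but moves $(1,\dots,1,1-n)$ outside the weights of $\Xi_n'^{\vee}$. Concretely, $\varepsilon^{\mathfrak k}_{1,2}\cdot e^{\vee}_{n,j}=\delta_{j,2}\,e^{\vee}_{n,1}$, so already $\varepsilon^{\mathfrak k}_{1,2}\cdot\omega_n'=0$. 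Your displayed identity would therefore give $\omega'_{P_\circ^{\vee}}=0$.

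The mechanism you plan in Step 3 fails for the same reason. Raising operators only increase weights, and the factors $e^{\vee}_{n,j}$ already have the positive weights $\epsilon_j-\epsilon_n$, so they can never be moved onto the diagonal. The paper instead starts from $\omega'_{H'^{\vee}_n}=(-1)^{n-1}e^{\vee}_{1,2}\wedge\cdots\wedge e^{\vee}_{1,n}$, whose factors have weights $\epsilon_j-\epsilon_1$. It then reads off the coefficient $(-1)^{(n-1)(n-2)/2}(n-1)!\cdots 2!$ of $e_1^{\vee}\wedge\cdots\wedge e_{n-1}^{\vee}$. The sign $(-1)^{n-1}$ of that lowest weight vector, relative to the normalization $\RI_n'(\xi_{\check{H}'_n})=\omega_n'$, needs its own argument, for instance descending from $\omega_n'$ with the Gelfand--Tsetlin lowering operators. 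It is one of the two sources of the final sign, and nothing in your plan produces it.

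Second, Step 3 is a hope rather than a computation, and its target $a=1/n!$ is reverse-engineered from the statement as printed. The paper reads the product as $\omega_{H_n^{\vee}}\wedge\omega_{H_n}\wedge\omega'_{P_\circ^{\vee}}$. That is the order of Lemma \ref{lemB}, the order needed to combine with Lemma \ref{lemA}, and the order quoted in Section \ref{sec:complexconjugation}. So the printed order is a typo of the same kind as $P_\circ$ for $P_\circ^{\vee}$, and the paper's computation gives $a=(-1)^{n(n-1)/2}/n!$. Your sign $(-1)^{b_n}$ is correct for the printed order, but it points you at the wrong sign of $a$.

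You are right that the trace relation must be tracked: $e^{\vee}_{n,n}\in\mathfrak p_n^{\vee}$ (a dual basis vector, not the paper's abbreviation $e^{\vee}_{i,i}:=e^{\vee}_i$) restricts to $-\sum_{i<n}e_i^{\vee}$ on $\widetilde{\mathfrak p}_n$. Be warned that the coefficient displayed in the paper does not seem to reflect this.
\begin{itemize}
\item For $n=2$, in $\mathfrak p_2^{\vee}$ one has $\varepsilon^{\mathfrak k}_{1,2}\cdot e^{\vee}_{1,2}=e^{\vee}_{1,1}-e^{\vee}_{2,2}$, which restricts to $2e_1^{\vee}$.
\item Hence $\omega'_{P_\circ^{\vee}}=\tfrac12\varepsilon^{\mathfrak k}_{1,2}\cdot(-e^{\vee}_{1,2})=-e_1^{\vee}$, i.e.\ $a=-1$ rather than $-\tfrac12$.
\item In general the diagonal part is a multiple of $\bigwedge_{i=1}^{n-1}(e^{\vee}_{i,i}-e^{\vee}_{i+1,i+1})$, which restricts to $n\,e_1^{\vee}\wedge\cdots\wedge e_{n-1}^{\vee}$.
\item For $n=2,3$ the paper's $\pm(n-1)!\cdots 2!$ is exactly the multiple before restriction.
\end{itemize}
So settle $n=2,3$ by hand before trying to force Step 3 onto the value $1/n!$.
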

	
	\begin{proof}
		We write
		\[
		\omega'_{P_{\circ}^{\vee}}=\sum_{\substack{
				1\leq i_1\leq i_2\leq \cdots\leq i_{n-1}\leq n-1\\
				1\leq j_1\leq j_2\leq\cdots\leq j_{n-1}\leq n
		}} C_{i_1,i_2,\dots,i_{n-1}}^{j_1,j_2,\dots,j_{n-1}}\cdot e^{\vee}_{i_1,j_1}\wedge e^{\vee}_{i_2,j_2}\wedge\cdots\wedge e^{\vee}_{i_{n-1},j_{n-1}},
		\]
		where $C_{i_1,i_2,\dots,i_{n-1}}^{j_1,j_2,\dots,j_{n-1}}\in\C$ and $e_{i,i}^{\vee}:=e_i^{\vee}$ for $1\leq i\leq n-1$. In view of Lemma \ref{lemB}, we have that
		\[
		\omega_{H_n}\wedge\omega_{H_n^{\vee}}\wedge\omega'_{P_{\circ}}=C_{1,2,\dots,n-1}^{1,2,\dots,n-1}\cdot\mathfrak{o}_{n,n}.
		\]
		
		Denote by $\varepsilon_{i,j}^{\mathfrak{k}}\in\mathfrak{k}_n$ the corresponding element of $\varepsilon_{i,j}$ ($1\leq i,j\leq n$) under the canonical identification $\mathfrak{k}_n=\mathfrak{gl}_n(\C)$. Then Lemma \ref{lemC} implies that
		\[
		\omega'_{P_{\circ}^{\vee}}=\frac{1}{n!(n-1)!\cdots 2!}\cdot(\varepsilon^{\mathfrak{k}}_{n-1,n})^1(\varepsilon^{\mathfrak{k}}_{n-2,n-1})^2\cdots(\varepsilon^{\mathfrak{k}}_{1,2})^{n-1}\omega'_{H'^{\vee}_n}.
		\]
		Recall that $\RI_n'(\Xi_n^{\vee})$ has the highest weight vector
		\[
		\omega'_{\check{H}_n'}=\omega_n'=e_{n,1}^{\vee}\wedge e_{n,2}^{\vee}\wedge\cdots\wedge e_{n,n-1}^{\vee}
		\]
		and it is easy to obtain the lowest weight vector
		\[
		\omega'_{H'^{\vee}_n}=(-1)^{n-1}e_{1,2}^{\vee}\wedge e_{1,3}^{\vee}\wedge\cdots\wedge e_{1,n}^{\vee}.
		\]
		We compute that
		\[
		\begin{aligned}
			&\,(\varepsilon^{\mathfrak{k}}_{n-1,n})^1(\varepsilon^{\mathfrak{k}}_{n-2,n-1})^2\cdots(\varepsilon^{\mathfrak{k}}_{1,2})^{n-1}\omega'_{H'^{\vee}_n}\\
			=&\,(\varepsilon^{\mathfrak{k}}_{n-1,n})^1(\varepsilon^{\mathfrak{k}}_{n-2,n-1})^2\cdots(\varepsilon^{\mathfrak{k}}_{1,2})^{n-1}\left(e_{1,2}^{\vee}\wedge e_{1,3}^{\vee}\wedge\cdots\wedge e_{1,n}^{\vee}\right)\\
			=&\,(-1)^{\frac{(n-1)(n-2)}{2}}\cdot(n-1)!(n-2)!\cdots2!\cdot\left(e_{1}^{\vee}\wedge e_{2}^{\vee}\wedge\cdots\wedge e_{n-1}^{\vee}\right)\\
			&+\text{(other terms)},
		\end{aligned}
		\]
		where `other terms' indicate the sum of elements of the form
		\[
		C\cdot e^{\vee}_{i_1,j_1}\wedge e^{\vee}_{i_2,j_2}\wedge\cdots\wedge e^{\vee}_{i_{n-1},j_{n-1}}\qquad (C\in\C^{\times})
		\]
		with
		\[
		1\leq i_1\leq i_2\leq \cdots\leq i_{n-1}\leq n-1,\qquad 1\leq j_1\leq j_2\leq\cdots\leq j_{n-1}\leq n,
		\]
		and
		\[
		\left(i_1,i_2,\dots,i_{n-1}\right)\neq\left(1,2,\dots,n-1\right),\quad\left(j_1,j_2,\dots,j_{n-1}\right)\neq\left(1,2,\dots,n-1\right).
		\]
		Hence
		\[
		C_{1,2,\dots,n-1}^{1,2,\dots,n-1}=\frac{(-1)^{\frac{n(n-1)}{2}}}{n!},
		\]
		which completes the proof of the lemma.
	\end{proof}
	
	Combining Lemma \ref{lemA} and \ref{lemD}, we see that
	\[
	C_{n,n}=(-1)^{\frac{n(n-1)(n+1)}{6}}\cdot(\RC^{\circ})^{-1}\cdot\frac{(-1)^{\frac{n(n-1)}{2}}}{n!},
	\]
	which implies \eqref{CNN}.

	\subsection{Calculation of archimedean Rankin-Selberg integrals}
	\label{sec:4.4}
	
	From \cite[Corollary 2.16]{IM} and \cite[Lemma 2.14]{HMN}, it is routine to verify that
	\[
	\begin{aligned}
		&\sum_{M\in\RG_n}\sum_{M'\in\RG_n}\sum_{P\in\RG_n'}\frac{(-1)^{\mathrm{q}(M')}}{\mathrm{r}(M')}\mathrm{c}_{M'}^{M-1,P+1}\cdot\RZ^{\circ}_{0_n,0_n,\chi_0}\left(f_M,f_{M'^{\vee}},\varphi_P;\chi_0,\mathrm{d}^{\circ}g\right)\\
		=& \,\mathrm{i}^{b_n}\cdot n!\cdot\mathrm{C}^{\circ}.
	\end{aligned}
	\]
	We finally conclude from \eqref{reduction} and \eqref{CNN} that
	\[
	\begin{aligned}
		&\wp_{0_n,0_n,\chi_0}\left([\kappa_{0_n}]\otimes[\kappa_{0_n}]\otimes[\kappa_{\eta_0,\chi_0}]\otimes\mathbf{o}_{n,n}\right)\\
		=&\,2^{-n(n+1)}\cdot\mathrm{i}^{-b_n}\cdot(-1)^{\frac{n(n-1)(n-2)}{6}}\cdot(n!)^{-1}\cdot(\mathrm{C}^{\circ})^{-1}\cdot\mathrm{i}^{b_n}\cdot n! \cdot \mathrm{C}^{\circ}\\
		=&\,2^{-n(n+1)}\cdot(-1)^{\frac{n(n-1)(n+1)}{6}},
	\end{aligned}
	\]
	which completes the proof of Theorem \ref{mainthm}(2).
	
	\subsection{Complex conjugation}\label{sec:complexconjugation}
	
	We assume that \eqref{CM'} holds and explain how our discussion applies to this case. The archimedean modular symbol $\wp_{\mu,\nu,\chi}$ is defined in the same way as \eqref{wpmunuchi}. Denote by $\eta_0$ the trivial character and $\chi_0:=\overline{\iota}|_{\K^{\times}}$ as before, so that $\overline{\chi}_0=\iota|_{\K^{\times}}$. For dominant weights $\mu=(\mu^{\iota};\mu^{\overline{\iota}})$ and $\nu=(\nu^{\iota};\nu^{\overline{\iota}})$, we write $\overline{\mu}=(\mu^{\overline{\iota}};\mu^{\iota})$ and $\overline{\nu}=(\nu^{\overline{\iota}};\nu^{\iota})$ respectively.

	By \cite[Lemma 4.3]{DX}, $\Xi_n'$ occurs with multiplicity one in $\wedge^{c_n}\widetilde{\mathfrak{p}}_n^{\vee}$. Moreover,
	\[
	\overline{\omega}_n':=\bigwedge_{1\leq j\leq n-1}e_{j,n}^{\vee}=e_{1,n}^{\vee}\wedge e_{2,n}^{\vee}\wedge\cdots\wedge e_{n-1,n}^{\vee}
	\]
	is a lowest weight vector of the $K_n$-type $\Xi_n'$ in $\wedge^{c_n}\widetilde{\mathfrak{p}}_n^{\vee}$. We fix an inclusion $\overline{\RI}_n:\Xi'_n\hookrightarrow\wedge^{c_n}\widetilde{\mathfrak{p}}_n^{\vee}$ such that $\overline{\RI}_n(\xi_{\check{H}_n'})=\overline{\omega}_n'$ and denote $\overline{\omega}_M':=\overline{\RI}_n'(\xi_M)$ for each $M\in\RG_n'$.
	
	By \cite[Lemma 4.1]{DX}, the $K_n$-representation $\Xi_n'^{\vee}$ occurs with multiplicity one in $I_{\eta_0,\overline{\chi}_0}$. We fix an inclusion $\overline{\RJ}_n':\Xi'^{\vee}_n\hookrightarrow I_{\eta_0,\overline{\chi}_0}$ such that $\overline{\RJ}_n'(\xi_{\check{H}_n'})(1_n)=1$ and denote $\overline{\varphi}_M:=\overline{\RJ}_n'(\xi_M)$ for each $M\in\check{\RG}_n'$. As in Section \ref{induced}, we fix a generator
	\[
	[\kappa_{\eta_0,\overline{\chi}_0}]=\sum_{M\in\check{\RG}_n'}\frac{(-1)^{\mathrm{q}(M)}}{\mathrm{r}(M)}\overline{\omega}'_{M^{\vee}}\otimes\overline{\varphi}_M\in\RH^{c_n}\left(\mathfrak{g}_n,\widetilde{K}_n;I_{\eta_0,\overline{\chi}_0}\right),
	\]
	and
	\[
	[\kappa_{\eta,\chi}]:=\jmath_{\eta,\chi}([\kappa_{\eta_0,\overline{\chi}_0}])\in\RH^{c_n}\left(\mathfrak{g}_n,\widetilde{K}_n;F_{\eta,\chi}^{\vee}\otimes I_{\eta,\chi}\right).
	\]
	
	Recall the generators 
	\[
	[\kappa_{\mu}]\in\RH^{b_{n}}\left(\mathfrak{g}_{n},\widetilde{K}_{n};F_{\mu}^{\vee}\otimes\pi_{\mu}\right),\quad
	[\kappa_{\nu}]\in\RH^{b_{n}}\left(\mathfrak{g}_{n},\widetilde{K}_{n};F_{\nu}^{\vee}\otimes\pi_{\nu}\right)
	\]
	defined in Section \ref{generic}.

	\begin{thmp}
		Retain the notations and assumptions as above. We have that
		\[
		\begin{aligned}
			&\wp_{\mu,\nu,\chi}\left([\kappa_{\mu}]\otimes[\kappa_{\nu}]\otimes[\kappa_{\eta,\chi}]\otimes\mathbf{o}_{n,n}\right)\\
			=&\,2^{-n(n+1)}\cdot (-1)^{\frac{n(n-1)(n+1)}{6}}\cdot  \overline{c'_{\overline{\mu},\overline{\nu},\overline{\chi}}}\cdot\varepsilon'_{\overline{\mu},\overline{\nu},\overline{\chi}}.
		\end{aligned}
		\]
	\end{thmp}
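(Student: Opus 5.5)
The plan is to deduce the statement from Theorem \ref{mainthm}(2) by a complex-conjugation symmetry that swaps $\iota$ and $\overline{\iota}$. I would not redo the computation of Sections \ref{sec:4.2}--\ref{sec:4.4}.

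Let $c$ denote the complex conjugation of $\K$ transported via $\iota$; it is an automorphism of $\K$ exchanging $\iota$ and $\overline{\iota}$. It induces an automorphism of $\GL_n(\K)$ preserving $K_n$ and $\widetilde K_n$. On the complexified Lie algebra $\mathfrak g_n=\mathfrak{gl}_n(\C_\iota)\oplus\mathfrak{gl}_n(\C_{\overline\iota})$ it acts by swapping the two factors, so $e_{i,j}\mapsto e_{j,i}$ and $e_i\mapsto e_i$.

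Pulling back along $c$ has the following effects:
\begin{itemize}
\item $\pi_\mu$ becomes $\pi_{\overline\mu}$, $\pi_\nu$ becomes $\pi_{\overline\nu}$, $F_\mu$ becomes $F_{\overline\mu}$, and $\chi$ becomes $\overline\chi$.
\item $I_{\eta,\chi}$ becomes $I_{\overline\eta,\overline\chi}$, and condition \eqref{CM'} becomes \eqref{CM}.
\item $\psi_{\K}$ is preserved, since $\sum_\iota\iota$ is $c$-invariant. Hence the Jacquet integrals, and so the normalized $\lambda_\mu$, transform into $\lambda_{\overline\mu}$ up to a scalar. That scalar is the complex conjugate of the $\mathrm{i}^{d_\mu}$ factor, because $d_{\overline\mu}$ is obtained from $d_\mu$ by swapping the roles of $\mu^\iota$ and $\mu^{\overline\iota}$. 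This is where the conjugation in $\overline{c'_{\overline\mu,\overline\nu,\overline\chi}}$ should come from.
\end{itemize}

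\textbf{Step 1 (transport of cohomological generators).} I would show that $c^*$ carries $[\kappa_{\mu}]$ to $[\kappa_{\overline\mu}]$ and $[\kappa_{\eta,\chi}]$ (defined via $\overline{\RI}_n,\overline{\RJ}'_n$) to the generator of Section \ref{induced} for $(\overline\eta,\overline\chi)$, up to explicit signs or conjugations. For the degenerate series this works as follows:
\begin{itemize}
\item $c$ sends $\overline{\omega}'_n=\bigwedge_j e_{j,n}^\vee$ to $\bigwedge_j e_{n,j}^\vee=\omega'_n$.
\item It carries $\Xi'_n$ to $\Xi'^\vee_n$, since $c$ acts on $K_n$ by $k\mapsto\overline k$, i.e. contragredient on representations.
\item It carries the normalization $\overline{\RJ}'_n(\xi_{\check H'_n})(1_n)=1$ to the normalization of Section \ref{induced}, after comparing evaluation at $1_n$ versus $w_n$.
\end{itemize}
The translation maps $\jmath$ of \cite[Propositions 1.1, 1.2]{JLSa} are canonical given the Whittaker and balanced data, so they commute with $c^*$ up to the same Whittaker scalars.

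\textbf{Step 2 (transport of the integral data).} I would check the following:
\begin{itemize}
\item $\RZ^\circ$ is transported to $\RZ^\circ$, since $L$-factors are symmetric under $\mu\leftrightarrow\overline\mu$ and $\mathrm d^\circ g$ is $c$-invariant.
\item $\phi_{\mu,\nu,\chi}$ goes to $\phi_{\overline\mu,\overline\nu,\overline\chi}$, by inspecting the construction in \cite[Section 1.3]{JLSa}.
\item The orientation $\mathfrak o_{n,n}$ goes to $\pm\mathfrak o_{n,n}$. Under $c$ the lower-triangular block $\bigwedge e_{i,j}^\vee$ ($i>j$) and the upper-triangular block are swapped, so a sign $(-1)^{b_n^2}=(-1)^{b_n}$ appears.
\end{itemize}
Combining Steps 1 and 2 with naturality of restriction gives
\[
\wp_{\mu,\nu,\chi}([\kappa_\mu]\otimes[\kappa_\nu]\otimes[\kappa_{\eta,\chi}]\otimes\mathbf o_{n,n})=(\text{explicit factor})\cdot\wp_{\overline\mu,\overline\nu,\overline\chi}([\kappa_{\overline\mu}]\otimes[\kappa_{\overline\nu}]\otimes[\kappa_{\overline\eta,\overline\chi}]\otimes\mathbf o_{n,n}).
\]
Theorem \ref{mainthm}(2) evaluates the right-hand side as $2^{-n(n+1)}(-1)^{n(n-1)(n+1)/6}c'_{\overline\mu,\overline\nu,\overline\chi}\varepsilon'_{\overline\mu,\overline\nu,\overline\chi}$.

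\textbf{Main obstacle.} The hard part is showing that the explicit factor, collected from the Whittaker normalizations $\mathrm{i}^{d_\mu},\mathrm{i}^{d_\nu}$, the orientation sign, and the $1_n$ versus $w_n$ normalization of $\overline{\RJ}'_n$, exactly converts $c'$ into $\overline{c'}$. Equivalently, it must amount to replacing each $\mathrm i$ by $\mathrm i^{-1}$ in $c'$, with no residual sign.

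Should this bookkeeping be intractable, I would fall back on an alternative route: repeat the argument of Sections \ref{sec:4.2}--\ref{sec:4.4} verbatim with $\overline{\omega}'$ and $\overline\varphi$ in place of $\omega'$ and $\varphi$. In that route, Lemma \ref{lemD} and the Ishii--Miyazaki integral evaluation give the complex-conjugate values, namely $\mathrm i^{-b_n}$ in place of $\mathrm i^{b_n}$. The value at the base point $(0_n,0_n,\overline\chi_0)$ then agrees with Theorem \ref{mainthm}(2), and the archimedean period relation \cite[Theorem 1.6]{JLSa} in its conjugate form (Case $(\mp)$ there) would supply the factor $\overline{c'_{\overline\mu,\overline\nu,\overline\chi}}\varepsilon'_{\overline\mu,\overline\nu,\overline\chi}$.
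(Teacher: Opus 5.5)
\paragraph{Verdict.} There is a genuine gap. Your main route is not completed, and your fallback, which is the paper's route, asserts an intermediate value that contradicts its own conclusion.

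\paragraph{The main route.} Transporting along the conjugation $c$ and appealing to Theorem~\ref{mainthm}(2) is a genuinely different idea from the paper's. However, the gap sits exactly at what you call the ``main obstacle'': the explicit factor is the whole content of the statement, and you neither compute it nor give a valid reason for its form.

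First, $c^{*}\pi_{\mu}$ is literally the principal series induced from $\overline{\RB}_n(\K)$ with the inducing characters of $\pi_{\overline{\mu}}$ in \emph{reversed} order. Hence $c^{*}$ does not carry the vectors $f_M$ (normalized by $\RJ_n(\xi_{H_n})(1_n)=1$ in the standard model), or the Jacquet integral $\CJ$ of that model, to their counterparts. To compare them you need:
\begin{itemize}
\item the long intertwining operator, with its scalars on the $K_n$-type $\Xi_n$ and against Jacquet integrals;
\item the Gelfand--Tsetlin signs in the identification $\Xi\circ c\cong\Xi^{\vee}$;
\item a comparison of the normalization of $\overline{\RJ}'_n$ at $1_n$ with that of $\RJ'_n$ at $w_n$.
\end{itemize}

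Second, your stated reason for the Whittaker scalar does not hold. Swapping $\mu^{\iota}$ and $\mu^{\overline{\iota}}$ does not negate $d_\mu$, since
$d_\mu+d_{\overline{\mu}}=\sum_{j}(n'+1-2j)(\widetilde{\mu}^{\overline{\iota}}_j-\widetilde{\mu}^{\iota}_{n'+1-j})$.
Already at $\mu=\overline{\mu}=0$ with $n'=2$ one has $d_0=1$. The two normalizations are then identical, yet $\overline{\mathrm{i}^{d_0}}\neq\mathrm{i}^{d_0}$. Moreover, $\Gamma_{\C}(\mu)\neq\Gamma_{\C}(\overline{\mu})$ in general.

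The claims that $\jmath_\mu$ and $\phi_{\mu,\nu,\chi}$ transport ``canonically'' are likewise unverified. So nothing in your sketch shows that the accumulated scalar is exactly the $(\mu,\nu,\chi)$-dependent sign $\overline{c'_{\overline\mu,\overline\nu,\overline\chi}}/c'_{\overline\mu,\overline\nu,\overline\chi}$ with no residue. The paper never needs such a transport.

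\paragraph{The fallback.} In structure your fallback is the paper's proof. The $(\mp)$ form of \cite[Theorem 1.6]{JLSa} reduces everything to the base point $(0_n,0_n,\overline{\chi}_0)$. Sections~\ref{sec:4.2}--\ref{sec:4.4} are then rerun with $\overline{\omega}'$, $\overline{\varphi}$, the coefficients $\mathrm{c}^{M+1,P-1}_{M'}$, and the comparison term $f_{H_n}\otimes f_{H_n^{\vee}}\otimes\overline{\varphi}_{P_{\circ}^{\vee}}$.

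Your account of what this rerun yields, however, is inconsistent. In the paper nothing is conjugated at the base point:
\begin{itemize}
\item the analogues of Lemmas~\ref{lemA} and~\ref{lemD} give $C'_{n,n}=C_{n,n}$; Lemma~\ref{lemD} produces the real number $(-1)^{b_n}/n!$, so ``conjugating'' it is vacuous;
\item \cite[Corollary 2.16]{IM} together with \cite[Lemma 2.14]{HMN} give the same value $\mathrm{i}^{b_n}\cdot n!\cdot\RC^{\circ}$ for the sum of normalized integrals;
\item hence the base value coincides with that of Section~\ref{sec:4.4}.
\end{itemize}

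Suppose instead, as you claim, that the integrals produced $\mathrm{i}^{-b_n}$. Keep $C'_{n,n}=C_{n,n}$ and the unchanged prefactor $2^{-n(n+1)}\mathrm{i}^{-b_n}$ coming from \eqref{measureid}. Then the base value would acquire an extra factor $(-1)^{b_n}$, contradicting your own assertion that it agrees.

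With the correct inputs, the factor $\overline{c'_{\overline\mu,\overline\nu,\overline\chi}}\,\varepsilon'_{\overline\mu,\overline\nu,\overline\chi}$ comes solely from the $(\mp)$ period relation, exactly as in the paper.
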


	\begin{proof}
		Due to the archimedean period relation \cite[Theorem 1.6]{JLSa}, it suffices to compute
		\[
		\wp_{0_n,0_n,\overline{\chi}_0}\left([\kappa_{0_n}]\otimes[\kappa_{0_n}]\otimes[\kappa_{\eta_0,\overline{\chi}_0}]\otimes\mathbf{o}_{n,n}\right).
		\]
		
		As in Section \ref{sec:4.2}, there exists a constant $C_{n,n}'\in\C$ such that 
		\begin{equation}\label{cnn'}
			\begin{aligned}
				\wedge\left([\kappa_{0_n}]\otimes\right.&\left.[\kappa_{0_n}]\otimes[\kappa_{\eta_0,\overline{\chi}_0}]\right)=C'_{n,n}\cdot\mathfrak{o}_{n,n}\\
				\otimes&\left(\sum_{M\in\RG_n}\sum_{M'\in\RG_n}\sum_{P\in\check{\RG}_n'}\frac{(-1)^{\mathrm{q}(M')}}{\mathrm{r}(M')}\mathrm{c}_{M'}^{M+1,P-1}\cdot f_M\otimes f_{M'^{\vee}}\otimes\overline{\varphi}_{P}\right)
			\end{aligned}
		\end{equation}
		by \cite[Lemma 4.3]{IM}, and we have that
		\[
		\begin{aligned}
			&\wp_{0_n,0_n,\overline{\chi}_0}\left([\kappa_{0_n}]\otimes[\kappa_{0_n}]\otimes[\kappa_{\eta_0,\overline{\chi}_0}]\otimes\mathbf{o}_{n,n}\right)\\
			=&\,2^{-n(n+1)}\cdot\mathrm{i}^{-b_n}\cdot C'_{n,n}\cdot\\
			&\left(\sum_{M\in\RG_n}\sum_{M'\in\RG_n}\sum_{P\in\check{\RG}_n'}\frac{(-1)^{\mathrm{q}(M')}}{\mathrm{r}(M')}\mathrm{c}_{M'}^{M+1,P-1}\cdot\RZ^{\circ}_{0_n,0_n,\overline{\chi}_0}\left(f_M,f_{M'^{\vee}},\overline{\varphi}_P;\overline{\chi}_0,\mathrm{d}^{\circ}g\right)\right).
		\end{aligned}
		\]
		
		The constant $C_{n,n}'$ can be calculated as in Section \ref{sec:4.3} by comparing the coefficients of the term
		\[
		f_{H_n}\otimes f_{H^{\vee}_n}\otimes\overline{\varphi}_{P^{\vee}_{\circ}}.
		\]
		on both sides of \eqref{cnn'}. The same proof of Lemma \ref{lemA} shows that
		\[
		(-1)^{\frac{n(n-1)(n+1)}{6}}\cdot(\RC^{\circ})^{-1}\cdot	\omega_{H^{\vee}_n}\wedge\omega_{H_n}\wedge\overline{\omega}'_{P_{\circ}}=C'_{n,n}\cdot\mathfrak{o}_{n,n},
		\]
		and the same proof of Lemma \ref{lemD} shows that
		\[
		\omega_{H^{\vee}_n}\wedge\omega_{H_n}\wedge\overline{\omega}'_{P_{\circ}}=\frac{(-1)^{\frac{n(n-1)}{2}}}{n!}\cdot\mathfrak{o}_{n,n}.
		\]
		Therefore, we have that
		\[
		C'_{n,n}=(-1)^{\frac{n(n-1)(n-2)}{6}}\cdot(n!)^{-1}\cdot(\RC^{\circ})^{-1}.
		\]
		
		From \cite[Corollary 2.16]{IM} and \cite[Lemma 2.14]{HMN}, we have
		\[
		\begin{aligned}
			&\sum_{M\in\RG_n}\sum_{M'\in\RG_n}\sum_{P\in\check{\RG}_n'}\frac{(-1)^{\mathrm{q}(M')}}{\mathrm{r}(M')}\mathrm{c}_{M'}^{M+1,P-1}\cdot\RZ^{\circ}_{0_n,0_n,\overline{\chi}_0}\left(f_M,f_{M'^{\vee}},\overline{\varphi}_P;\overline{\chi}_0,\mathrm{d}^{\circ}g\right)\\
			=& \,\mathrm{i}^{b_n}\cdot n!\cdot\mathrm{C}^{\circ}.
		\end{aligned}
		\]
		We conclude that
		\[
		\begin{aligned}
			\wp_{0_n,0_n,\overline{\chi}_0}\left([\kappa_{0_n}]\otimes[\kappa_{0_n}]\otimes[\kappa_{\eta_0,\overline{\chi}_0}]\otimes\mathbf{o}_{n,n}\right)=2^{-n(n+1)}\cdot (-1)^{\frac{n(n-1)(n+1)}{6}},
		\end{aligned}
		\]
		which completes the proof of the theorem.
	\end{proof}

\end{document}